\newcommand\CORR[1]{{\color{black}{#1}}}
\newcommand\bx{\bar{x}}
\newcommand{\EEA}{\end{eqnarray}}
\newcommand{\BEA}{\begin{eqnarray}}
\newcommand{\comment}[1]{}
\newtheorem{thm}{Theorem}[section]
\newtheorem{prop}[thm]{Proposition}
\newtheorem{example}[thm]{Example}
\newtheorem{rem}[thm]{Remark}
\title{Learning Coarse-Grained Dynamics on Graph}
\author[1]{Yin Yu\thanks{Department of Aerospace Engineering, yzy5368@psu.edu}}
\author[1]{John Harlim\thanks{Department of Mathematics, Department of Meteorology and Atmospheric Science, \& Institute for Computational and Data Sciences, jharlim@psu.edu}}
\author[1]{Daning Huang\thanks{Corresponding author, Department of Aerospace Engineering,  daning@psu.edu}}
\author[1]{Yan Li\thanks{Department of Electrical Engineering, yql5925@psu.edu}}
\affil[1]{The Pennsylvania State University, University Park, PA, 16802}
\date{}
\begin{document}

\maketitle

\begin{abstract}
We consider a Graph Neural Network (GNN) non-Markovian modeling framework to identify coarse-grained dynamical systems on graphs. Our main idea is to systematically determine the GNN architecture by inspecting how the leading term of the Mori-Zwanzig memory term depends on the coarse-grained interaction coefficients that encode the graph topology. Based on this analysis, we found that the appropriate GNN architecture that will account for $K$-hop dynamical interactions has to employ a Message Passing (MP) mechanism with at least $2K$ steps.
We also deduce that the memory length required for an accurate closure model decreases as a function of the interaction strength under the assumption that the interaction strength exhibits a power law that decays as a function of the hop distance. Supporting numerical demonstrations on two examples, a heterogeneous Kuramoto oscillator model and a power system, suggest that the proposed GNN architecture can predict the coarse-grained dynamics under fixed and time-varying graph topologies. 
\end{abstract}

{\bf Keywords:} networked dynamics, coarse-graining, graph neural network, non-Markovian modeling, time-varying topology

\section*{Nomenclature}

{\renewcommand\arraystretch{1.0}
\noindent\begin{longtable}{@{}l @{\quad=\quad} l@{}}
$A$, $\alpha_i$      & Coefficients for autonomous dynamics, and the component for node $i$ \\
$B$, $\beta_{ij}$    & Interaction strengths, and the component between nodes $i$ and $j$ \\
$D_i$                & Number of features at a node in the $i$th layer of the neural network \\
$d$                  & Average number of 1-hop neighbors of all nodes \\
$\cE_t$, $\bar{\cE}_t$ & Edges of a graph at time $t$, and the coarse-grained version \\
$F$, $f_{ij}$        & Interaction dynamics, and the component between nodes $i$ and $j$ \\
$F_G$                & A function represented by graph neural network \\
$f$, $f_i$           & Autonomous dynamics, and the component for node $i$ \\
$f_E$, $f_P$, $f_D$  & Encoder, processor, and decoder of the neural network \\
$\cG_t$, $\bar{\cG}_t$ & Graph at time $t$, and the coarse-grained version \\
$H$, $h_i^{(k)}$     & Features in neural network, and the $i$th component at layer $k$ \\
$I_1$                & Memory integral of leading order non-Markovian term \\
$J_k$                & Integer index for graph at time $k$ \\
$K$                  & Total number of hops \\
$\cL$                & Infinitesimal Koopman generator \\
$L$                  & Lipschitz constant \\
$\vL$, $\tilde{\vL}$ & Weighted graph Laplacian, and its transformed version \\
$M$                  & Number of node groups \\
$m_j$                & Number of resolved states of group $j$ \\
$\cN_{i,t}^{(k)}$    & The $k$-hop neighbors of node $i$ at time $t$ \\
$\cN_{i,t}^{[k]}$    & The nodes that are exactly $k$-hop away from node $i$ at time $t$ \\
$\bar{\cN}$          & Neighbors after coarse-graining \\
$N$                  & Number of nodes \\
$N_C$                & Number of graph convolution layers \\
$N_x$                & Number of states of entire graph \\
$n_i$                & Number of states at node $i$ \\
$\cP$, $\cQ$         & Projection operators \\
$P$, $Q$             & Matrices for coordinate projection \\
$R$                  & 2-Norm of $\Phi$ \\
$r^{(1)}$, $r^{(2)}$ & Resolved and unresolved dynamics after projection \\
$S$                  & Order of one graph convolution layer \\
$s_j$                & Number of unresolved states of group $j$ \\
$T$                  & Length of memory \\
$\cV$, $\cV_j$       & All nodes of a graph, and the $j$th group \\
$\bar{\cV}$          & All nodes of a coarse-grained graph \\
$\bar{\cW}$, $\bar{w}_{ij}$ & Edge weights of $\bar{\cG}$, and the $(i,j)$th component \\
$x$, $x_i$           & State vector, and the component for node $i$ \\
$\bar{x}$, $\bar{x}_j$   & Resolved states, and the component for group $j$ \\
$x'$, $x_j'$         & Unresolved states, and the component for group $j$ \\
$\Delta t$           & Time step size \\
$\epsilon$           & Decay rate of interaction strength \\
$\kappa$             & Interaction strength coefficient in Kuramoto oscillators \\
$\vtQ$               & Learnable parameters of a neural network \\
$\sigma$             & Activation function in neural network \\
$\Phi$, $\Phi_{ij}$  & Basis for resolved states, and its $(i,j)$th component \\
$\Psi$, $\Psi_{ij}$  & Basis for unresolved states, and its $(i,j)$th component \\
$|\cdot|$            & The number of elements in a set; matrix $\infty$-norm \\
$|\cdot|_2$          & Matrix 2-norm \\
$\|\cdot\|$          & Norm in Banach space $C(\cA,\bR^m)$ \\
$\Box^+$             & Left inverse \\
$\overline{\Box^\Phi}$                 & Projected component \\
$\widetilde{\Box^\Phi}$, $\Box^\Psi$   & Residual components \\
$[\Box]_i$, $[\Box]_{ij}$              & The $i$th or $(i,j)$th component \\
$\Box^{11}$, $\Box^{12}$               & Coefficients after coarse-graining
\end{longtable}}

\section{Introduction}

Dynamical systems on graphs have many natural applications in modeling complex systems such as power grid systems \cite{Yu2024}, particle systems in physics \cite{battaglia2016interaction}, chemical reaction predictions \cite{do2019graph}, fluid dynamic simulation \cite{belbute2020}, and many other applications where the data are represented as graphs. To facilitate such a modeling paradigm, Graph Neural Networks (GNN) have emerged as the state-of-the-art approaches (see the comprehensive reviews in \cite{wu2020comprehensive,zhou2020graph}). 

While graph representation is an effective way to suppress the scaling of complex dynamical systems, the computational cost of training such models is significant for large graphs and/or when the state dimension of each node is high. What is more daunting is the availability of the data to train such a GNN whose parameters only increase as functions of the network size and state dimension. Since the system is usually partially observed, where the observable is a coarse-grained variable, then it is more relevant to identify a reduced-order model induced by the coarse-graining of the graphs to predict the dynamics of the observable.

The main contribution of this paper is toward the design of appropriate GNN architecture for the reduced-order model when the underlying system exhibits $k$-hop interactions, i.e., having interactions between nodes that are $k-1$ nodes apart. Since the graph topology is encoded by the interaction coefficients of the underlying full system, the design of the reduced-order model should, in principle, encode the interaction coefficients of the coarse-grained graph dynamics as inputs. To facilitate this information in GNN, we employ the Mori-Zwanzig formalism \cite{zwanzig:73,stinis2007higher} to verify that the reduced-order model, or specifically, the leading order approximation of the Mori-Zwanzig memory term, depends quadratically on the interaction coefficients of the coarse-grained graph dynamics. Instead of approximating the Mori-Zwanzig memory kernels as in the classical literature (e.g., \cite{harlim2015parametric,lei2016data}), which is a difficult task and is problem-dependent, we use this quadratic dependence information to determine the number of layers and the polynomial order in the Message Passing (MP) mechanism of GNN, which corresponds to the number of hops in the interaction.
Understanding such a dependence also allows us to deduce the relation between the memory length in the non-Markovian reduced-order model to the interaction strength, under the assumption that the interaction coefficient for $k$-hop neighbors is of order $\epsilon^k$. In particular, we found that the delay embedding time parameter of the reduced-order model that incorporates $2k$-hop in their GNN architecture is short (or long) for systems with strong (or weak) interaction strength. We support this theoretical estimate with a numerical validation on an academic testbed, the Kuramoto model \cite{kuramoto1975international}.

Furthermore, we will demonstrate that our approach can predict the dynamic of the observable under topological changes of the graphs, which has practical significance in applications. For example, in the modern power grid, the complex dynamical system is characterized by multiple microgrids of interconnected loads and Distributed Energy Resources (DERs). In this application, it is important to understand the system's sensitivity caused by topology changes, which allows the controller to make appropriate decisions to either contribute energy to the grid or sustain autonomous operation during power outages or in regions devoid of grid connectivity. 

With power system application in mind, in addition to verifying the approach on the Kuramoto model, we will demonstrate the GNN model on a microgrid consisted of 5 generator buses and 5 load buses. We will verify our hypothesis that the system interaction strength decays as a function of hops and demonstrate the accuracy of the proposed GNN in estimating the prediction of the bus voltage under time-varying topologies. 

The remainder of this paper is organized as follows. In Section~\ref{sec2}, we discuss the canonical formulation for dynamics on graphs and give two relevant examples, the Kuramoto model and a microgrid power system. In Section~\ref{sec3}, we employ the Mori-Zwanzig formalism to verify that the leading-order expansion of the non-Markovian term has a quadratic dependence on the pairwise interaction coefficients. In Section~\ref{sec:gnn}, we describe the detail of the GNN model facilitated by the analysis in Section~\ref{sec3}.
In Section~\ref{sec5}, we present numerical simulations of the proposed GNN on the two examples. We supplement the paper with two appendices: one that reports the details of the power grid model and another one that reports the detailed coarse-graining calculations.

\section{Dynamics on Graphs}\label{sec2}
In this section, we provide the notations for dynamics on graphs and discuss two relevant examples that will be numerically investigated in this paper.

We first define the graph structure that may have time-varying topology.  At time $t$, let $\cG_t=(\cV,\cE_t)$ be a graph with a set of $N$ nodes, $\cV\subset\bN^+$, and a set of edges $\cE_t\subseteq\cV\times\cV$.  The graph may contain self-connections, e.g., for node $i$, $(i,i)\in\cE_t$.  The graph may contain directed connections between nodes $i$ and $j$, i.e., $(i,j)\in\cE_t$ and $(j,i)\notin\cE_t$.  By convention, $(i,j)$ means $i$ pointing to $j$.

For node $i$, define its 0-hop neighbor as itself $\cN_i^{(0)}=\{i\}$, and its 1-hop neighbors at time $t$ as $\cN_{i,t}^{(1)}=\{k|(k,i)\in\cE_t\}\cup \{i\}$.  For conciseness, denote $\cN_{i,t}=\cN_{i,t}^{(1)}$.  The number of 1-hop neighbors, $\abs{\cN_{i,t}}$, is referred to as the degree of node $i$, $d_i$.  The average degree of all nodes is denoted by $d$.
The $k$-hop neighbors at time $t$ are defined as $\cN_{i,t}^{(k)}=\bigcup_{n\in\cN_{i,t}^{(k-1)}}\cN_{n,t}$ for $k\geq 1$.
At time $t$, define the set of nodes that are exactly $k$-hop away from node $i$ as $\cN_{i,t}^{[k]}=\{m|m\in\cN_{i,t}^{(k)}\wedge m\notin\cN_{i,t}^{(k-1)}\}$.  Clearly, $\cN_{i,t}^{(k)}=\bigcup_{j=0}^k\cN_{i,t}^{[j]}$.  When the graph is randomly connected, so that $d_i=O(d)$, $\abs{\cN_{i,t}^{[k]}}=O(d^k)$.

Next, we define a dynamical system on a time-varying graph $\cG_t$.  Suppose node $i$ has states $x_i\in\cX_i\subset\bR^{n_i}$, and the state dimensions can be different among the nodes.  Denote all states of the graph dynamics
$
    x = [x_1,x_2,\cdots,x_N]\in\cX\subset\bR^{N_x}
$,
where $\cX=\cX_1\times\cX_2\times\cdots\times\cX_N$ and $N_x=\sum_{i=1}^N n_i$.  Furthermore, let the dynamic at node $i$ be governed by the following system of ODEs,
\begin{equation}\label{eqn_nod}
    \dot{x}_i = \alpha_if_i(x_i) + \sum_{j\in\cN_{i,t}^{(K)}} \beta_{ij}f_{ij}(x_i,x_j), 
\end{equation}
where $\alpha_i\in\bR^{n_i\times n_i}$, $\beta_{ij}\in\bR^{n_i\times n_j}$. Here, we assume that the functions $f_i:\cX_i\mapsto\bR^{n_i}$
and $f_{ij}:\cX_i\times\cX_j\mapsto\bR^{n_j}$ to be $C^1$  on a compact forward-invariant set, $\mathcal{A} \subset \mathcal{X}$ \cite{mauroy2016global,mauroy2020introduction} (e.g., if this is a measure preserving dynamics, $\mathcal{A}$ can denote the compact support of the invariant measure). In \eqref{eqn_nod}, the first term accounts for the autonomous dynamics (with respect to $x_i$) and the summation accounts for the pair-wise interactions among up to $K$-hop neighbors.

The full-order graph dynamics is denoted compactly as
\begin{equation}\label{eqn_dyn}
    \dot{x} = Af(x) + B(t) \otimes F(x,x),
\end{equation}
where 
\[A = \begin{pmatrix}\alpha_1 \\ & \alpha_2  \\ & & \ddots \\ & & & \alpha_N \end{pmatrix}\]
is a matrix of size $N_x\times N_x$ and the following tensors are defined as,
\[
B(t) = (\beta_{ij}(t) \in \bR^{n_i\times n_j })_{i,j=1}^{N}, \quad F(x,x) = (f_{ij}(x_i,x_j) \in \bR^{n_j})_{i,j=1}^{N},
\]
with $\beta_{ij}=0$ and $f_{ij}=0$ if $j\notin\cN_{i,t}^{(K)}$ and $\beta_{ii}=0$ for any $i$.
The notation $\otimes$ is to denote the sum of the ``element-wise'' product between the matrix $\beta_{ij}$ and vector $f_{ij}$; in \eqref{eqn_dyn} the sum is effectively over $K$-hop neighbors $j\in\cN^{(K)}_{i,t}$. Therefore, it is clear that $B(t)\otimes F(x,x) \in \bR^{N_x}$.
The system of ODEs in \eqref{eqn_dyn} is nothing but a non-autonomous system with affine control $\{\beta_{ij}(t)\}$.


\begin{rem}
    Conventionally, two nodes are directly connected if they interact.  But in the current definition, two nodes that are $k$-hop apart may still interact.  We introduce this characteristic to study the impact of distant coupling on graph dynamics.
\end{rem}

\begin{example}[Kuramoto model]\label{eg:kuramoto_1}
The Kuramoto model describes the coupling among a network of oscillators (i.e., nodes), and in its classical version \cite{kuramoto1975international} each node interacts with all other nodes.  In this study, we consider a modified version \cite{Filatrella2008,Menara2019}, where the nodes only interact with their 1-hop neighbors.  The governing equation is
\BEA
\dot{\theta}_i = \omega_i + \sum_{j\in\cN_{i,t}} \kappa_{ij}\sin(\theta_j-\theta_i), \label{kuramoto}
\EEA
where the interaction strength between nodes $i$ and $j$ is given by $\kappa_{ij}(t)> 0$.  The neighborhood $\cN_{i,t}$ is determined by the time-varying graph topology, an instance of which will be provided in the numerical examples. 

For example, if the topology represented in Figure~\ref{kuramoto_topo}(a) of the manuscript is given at time $t$, then the neighborhoods at time $t$ are fully determined (e.g., $\mathcal{N}_{1,t}=\{2,3,4\}$, $\mathcal{N}_{2,t}=\{1,3,4,9\}$, etc.); at a different time $t'$ if the topology is altered, then the neighborhoods $\mathcal{N}_{i,t'}$ might change too.




Letting
\[
x_i =(x_{1,i},x_{2,i}) = (\cos(\theta_i),\sin (\theta_i)),
\]
we can rewrite the Kuramoto model \eqref{kuramoto} as \eqref{eqn_nod} with $n_i=2$,
\BEA
\alpha_i = \begin{pmatrix} 0 & -\omega_i \\ \omega_i & 0\end{pmatrix},\quad
\beta_{ij} = \begin{pmatrix} \kappa_{ij} & 0 \\ 0 & \kappa_{ij} \end{pmatrix},
\notag
\EEA
$f_i(x_i) = x_i$, 
$f_{ij}(x_i,x_j) = \begin{pmatrix} -x_{2,i}(x_{2,j}x_{1,i}-x_{1,j}x_{2,i}) \\ x_{1,i}(x_{2,j}x_{1,i}-x_{1,j}x_{2,i})
\end{pmatrix}
$, and the summation is over 1-hop neighbors. 

\end{example}

\begin{example}[Power system]\label{eg:power_1}

The transient dynamics of a power system is governed mainly by DERs and power loads.  Conventionally the dynamics is written as a set of nonlinear differential-algebraic equations (DAEs), however, the dynamics can be converted to a node-wise form that resembles \eqref{eqn_nod}; see the detailed derivation in Appendix \ref{sec_pow}.

Consider a power system of $N$ nodes.  For a node `$i$' that is connected to DERs, the states are $x_i=[\vx_i,\vy_i]$, where $\vx_{i}$ are the state variables of the $i^{\text{th}}$ DER unit and $\vy_i$ are the algebraic variables, e.g., bus voltage amplitude and angle.  The dynamics of the DER node is,
\begin{equation}\label{eqn_pow_der}
    \begin{bmatrix}
        \dot{\vx}_i \\ \dot{\vy}_i
    \end{bmatrix} =
    \underbrace{\begin{bmatrix}
        \vK & O \\ O & \bar{\vY}^{-1}_{ii}
    \end{bmatrix}}_{\equiv \alpha_i}
    \underbrace{\begin{bmatrix}
        \vF_i(\vx_i,\vy_i) \\ \vG_i(\vx_i,\vy_i)
    \end{bmatrix}}_{\equiv f_i(x_i)}
    +
    \sum_{i\neq j}
    \underbrace{\begin{bmatrix}
        O & O \\ O & \bar{\vY}^{-1}_{ij}
    \end{bmatrix}}_{\equiv \beta_{ij}}
    \underbrace{\begin{bmatrix}
        O \\ \vG_j(\vx_j,\vy_j)
    \end{bmatrix}}_{\equiv f_{ij}(x_i,x_j)},
\end{equation}
where the definitions of each term are provided in Appendix \ref{sec_pow}.

For a non-DER node `$i$', the states reduce to $x_i=[\vy_i]$ and the dynamics is
\begin{equation}\label{eqn_pow_nde}
    \dot{\vy}_i = \sum_{i\neq j} \underbrace{\bar{\vY}^{-1}_{ij}}_{\equiv \beta_{ij}} \underbrace{\vG_j(\vx_j,\vy_j)}_{\equiv f_{ij}(x_i,x_j)},
\end{equation}
where $\beta_{ij}=0$ if $j\notin\cR$ and $\cR\subset\cV$ denotes the set of DER nodes; $\alpha_i=0$ due to the lack of the autonomous term.
The power system dynamics is highly heterogeneous due to two different types of nodal dynamics (DER nodes and non-DER nodes).  Furthermore, note that the summations in \eqref{eqn_pow_der} and \eqref{eqn_pow_nde} for a node involve nodes beyond the 1-hop neighbors.  This shows interactions between nodes that are not directly physically connected, and hence the system has $K$-hop interactions, $K>1$, which further complicates the modeling of power system dynamics.  The specific value of $K$ will be discussed in the Sec. \ref{sec5}.

\end{example}

\section{Reduced-Order Modeling Based on Mori-Zwanzig Formalism}\label{sec3}


As we stated in the introduction, our aim is to identify a reduced-order model to predict an observable $g:\mathbb{R}^{N_x} \to\mathbb{R}^n$ given time series of $u_i\equiv g(x(t_i))$ and the coarse-grained graph topology over a time sequence $\cT=\{t_1,t_2,\cdots,t_T\}$, evenly sampled with step size $\Dt$. Practically, we would like a reduced-order model to account for the knowledge of the graph structure that behaves as a time-dependent input parameter in the dynamical model. In \eqref{eqn_dyn}, while parameters $\beta_{ij}$ may not be available, the graph topology is encoded in the set $\cN_{i,t}^{(K)}$, which is assumed to be known.

One of the key issue here is that the dynamical system for arbitrary observable $g$ may not be written as a closed system of ODEs. To encompass this issue, we will make use of the Mori-Zwanzig formalism, which employs the Duhamel's principle to represent the dynamical system for the observable $g$ as a system of integro-differential equations (or non-Markovian).



For the affine control system in \eqref{eqn_dyn}, let us define the parameter maps, $\phi^t:\cA \times \cB \to \cA$, $t\in \bR^+$, where $\cB$ is the space of admissible control signals, $\{\beta_{ij}(\cdot)\}$, where $\beta_{ij}(\cdot):\bR^+ \to \bR^{n_i\times n_j}$. Consider a Banach space of observable,
\[
\cF = C(\cA,\bR^n)\equiv\{v:\cA\to\bR^n, v \text{ is continuous}\},
\]
then one can define a family of Koopman operators $U^t:\cF \to \cF$, 
\[
U^t g(x) = g \circ \phi^t(x,\{\beta_{ij}\}),
\]
for all $g\in \cF$. If $g\in C^1(\cA,\bR^n)$, then $u(x,t)\equiv U^tg(x)$ solves the PDE 
\BEA
\partial_t u(x,t) = \cL u(x,t), \quad\quad u(x,0)=g(x),\label{PDE}
\EEA
where
$\cL = (Af+B\otimes F)\cdot \nabla,$
denotes the infinitesimal generator of $U^t$ \cite{mauroy2020introduction,mauroy2016global,lasota2013chaos}. For the affine control system in \eqref{eqn_dyn}, $\cL u$ is bilinear in $u$ and $\beta_{ij}$ (see detailed discussion on the bilinearity in, e.g., Sections 1.2 and 4.1 of \cite{mauroy2020introduction}).

\subsection{Coarse-grained projection}\label{sec:CGG}
While the solution of the PDE in \eqref{PDE} characterizes the observable that is governed by the control affine problem in \eqref{eqn_dyn}, the fact that $u$ depends on the full state $x \in \bR^{N_x}$ is not computational practical. 
What we are often interested is to characterize the dynamics of an observable that depends on a lower dimensional coarse-grained (or resolved) variable. In the following, we define a generic coarse-graining strategy on a graph and the projection operators for such coarse-graining. Subsequently, we apply the projection to the dynamics and provide two examples.

\subsubsection{Coarse-graining of graph}\label{sec_cg}

Consider a generic coarse-graining strategy on graph as follows.  First, divide the nodes in $\cV$ into $M$ non-overlapping sets of nodes $\{\cV_i\}_{i=1}^M$, so that $\cV=\bigcup_{i=1}^M \cV_i$.
For each set $\cV_j$, we introduce the decomposition of resolved and unresolved variables, $\bar{x}_j\in\bR^{m_j}$ and $x_j'\in\bR^{s_j}$, such that
$$
x_i = \Phi_{ij}\bar{x}_j + \Psi_{ij}x_j',\quad i\in\cV_j,
$$
where $\Phi_{ij}\in\bR^{n_i\times m_j}$ and $\Psi_{ij}\in\bR^{n_i\times s_j}$.  
Here, we set $s_j = \sum_{i=1}^{|\cV_j|} n_i - m_j$, such that the decomposition recovers the space spanned by $\{x_i\}_{i\in \cV_j}$, whose dimension is $\sum_{i=1}^{|\cV_j|}n_i$. That is,
\[
\begin{pmatrix}\vdots \\ x_i \\ \vdots\end{pmatrix} = \begin{pmatrix}\vdots \\ \Phi_{ij} \\ \vdots\end{pmatrix} \bar{x}_j + \begin{pmatrix} \vdots  \\\Psi_{ij} \\ \vdots \end{pmatrix} x_j' \equiv {\underbrace{\Phi_j}_{\text{rank } m_j } \bar{x}_j + \underbrace{\Psi_j}_{\text{rank } s_j} x'_j}, \quad \forall i \in \cV_j.
\]
We require {$\Phi_{j}$} and $\Psi_{j}$ to be of full column rank, and the following orthogonality condition to hold 
\begin{equation}\label{eqn_orth}
    \Phi_j^\top\Psi_j = \sum_{i\in\cV_j} \Phi_{ij}^\top\Psi_{ij} = O.
\end{equation}

Up to this point, we have obtained a coarse-grained graph $\bar{\cG}$ of $M$ nodes $\bar{\cV}=\{i\}_{i=1}^M$, and each node has state $\bar{x}_i$.  The new edges are  $\bar{\cE}\subset \bar{\cV}\times \bar{\cV}$. Here, $(i,j)\in \bar{\cE}$ if there exists nodes 
$k\in \cV_i$ and $l\in\cV_j$ that are connected, $(k,l)\in \cE$. If $l \in \cN_{k,t}^{(K)} \cap \cV_j$, then $j \in \bar{\cN}_{i,t}^{(K)}$.

Collectively, denote the resolved states as 
$
\bar{x} =(\bar{x}_1,\ldots, \bar{x}_M)^\top\in \bR^n
$, where $n=\sum_{i=1}^M m_i$, and unresolved states as
$
x' =(x_1',\ldots, x_M')^\top\in \bR^{N_x-n}
$.
The state decomposition on graph is denoted
\begin{equation}
    x = \Phi\bar{x} + \Psi x',\notag
\end{equation}
where $\Phi\in\bR^{N_x\times n}$ is a $N\times M$ block matrix, 
$$
[\Phi]_{ij} = \begin{cases}\Phi_{ij}, & i\in \cV_j, \\ 0, & \text{otherwise,}
\end{cases}
$$
where $\Phi_{ij} \in \mathbb{R}^{n_i\times m_j}$. Also, $\Psi \in \mathbb{R}^{N_x\times (N_x-n)}$ has the same $N\times M$ block structure,
$$
[\Psi]_{ij} = \begin{cases}\Psi_{ij}, & i\in \cV_j, \\ 0, & \text{otherwise,}
\end{cases}
$$
where $\Psi_{ij}\in \mathbb{R}^{n_i \times s_j}$. With the choice of $s_j = \sum_{k=1}^{|\cV_j|}n_k - m_j$, it is clear that $\sum_{j=1}^M s_j = N_x-n$.
By the orthogonality condition \eqref{eqn_orth}, it is clear that $\Phi$ and $\Psi$ are orthogonal to each other, i.e., $\Phi^\top\Psi=O$.

\subsubsection{Definition of projection}
Next we define the projection operators for the above coarse-graining strategy on graph.

First, we introduce the left inverse of $\Phi$ as $\Phi^+\in\bR^{n\times N_x}$, such that $\Phi^+\Phi=I$ and $\Phi^+\Psi=O$; similarly define $\Psi^+\in\bR^{(N_x-n)\times N_x}$ such that $\Psi^+\Psi=I$ and $\Psi^+\Phi=O$.  For convenience, we define the projection matrices $P \equiv \Phi\Phi^{+}$ and $Q \equiv I - P = \Psi\Psi^{+}$. Since $|\Phi|_2 = S <\infty$ is usually given, where {$|\cdot|_2$} denotes the usual matrix 2-norm, it will be convenient to choose $\Psi$ such that {$|\Psi|_2 = R$} to simplify the scaling law calculation in Section~\ref{sec:memory}.
In the following derivation, we choose $\Phi^+$ to be a $M\times N$ block matrix, where $[\Phi^+]_{ij}=0$ if $j\notin\cV_i$; similarly for $\Psi^+$.
In Appendix \ref{sec_lin_inv}, we show the construction of such matrices.

Next, for any continuous real-valued function $g:\cA\in \bR^{N_x} \to \bR$, we define the projection operator as,
\begin{equation}
    \cP g(x) = g(Px) \equiv \bar{g}(\bar{x}),\notag
\end{equation}
restricting the function to depends only on $\bar{x}$. Correspondingly, the residual
\begin{equation}
    \cQ g(x) = g(x) - \bar{g}(\bar{x}) \equiv g'(x).\notag
\end{equation}
Clearly $\cP \bar{g}(\bar{x})=\bar{g}(\bar{x})$ and $\cP g'(x)=0$. We can extend $\cP$ to any multi-variable functions by employing the projection on each component. Relevant to us is to employ the projection operator to functions in $\cH = C(\cA,\bR^n)$ as well as in $C(\cA,\bR^{N_x-n})$. Since our intention is to express the equation to reduced-order coordinates, we require
\begin{equation}\label{eqn_prj_p0}
    \cP (M g(x))=MPg(Px),
\end{equation}
for any $M\in \bR^{m\times N_x}$, $g:\bR^{N_x} \to \bR^{N_x}$, and $m\in \mathbb{N}$.  For the ease of subsequent derivations, \eqref{eqn_prj_p0} is further manipulated as follows,
\begin{align}\label{eqn_prj_p}
    \cP (M g(x)) &= MPg(Px) = (M\Phi) \Phi^+ \bar{g}(\bar{x}) \equiv (M\Phi) \overline{g^{\Phi}}(\bar{x}).
\end{align}
Similarly, the residual is decomposed as follows,
\begin{align}\nonumber
    \cQ (M g(x)) &= M g(x) - \cP (M g(x)) = M (P+Q) g(x) - (M\Phi) \overline{g^{\Phi}}(\bar{x}) \\\nonumber
    &= (M\Phi) \Phi^+ (g(x) - \bar{g}(\bar{x})) + (M\Psi) \Psi^+ g(x) \\\label{eqn_prj_q}
    &\equiv (M\Phi) \widetilde{g^\Phi}(x) + (M\Psi) g^\Psi(x).
\end{align}
In sum, through the projection we have defined $\overline{g^{\Phi}}(\bar{x}) \equiv \Phi^+ \bar{g}(\bar{x})$, $\widetilde{g^\Phi}(x) \equiv \Phi^+ (g(x) - \bar{g}(\bar{x}))$, and $g^\Psi(x) = \Psi^+ g(x)$.

\subsubsection{Projection of dynamics}

Let us write the right hand side of \eqref{eqn_dyn} as follows,
\BEA
\dot{\bar{x}} &=& \Phi^+ \dot{x} = h_1(x) + H_{1}(x,x), \notag \\
\dot{x}' &=& \Psi^+ \dot{x} = h_2(x) + H_{2}(x,x), \notag
\EEA
where
\BEA
h_1(x) &=& \Phi^+(Af(x))\quad
H_{1}(x,x) = \Phi^+(B(t)\otimes F(x,x)), \notag \\
h_2(x) &=& \Psi^+(Af(x)), \quad
H_{2}(x,x) = \Psi^+(B(t)\otimes F(x,x)). \notag
\EEA
We can rewrite the dynamics as,
\begin{subequations}\label{eqn_dec}
\begin{align}
    \dot{\bar{x}} &= \underbrace{\cP\left( h_1(x)+ H_{1}(x,x)\right)}_{\equiv r^{(1)}(\bar{x})} + \underbrace{\cQ\left(h_1(x)+ H_{1}(x,x)\right)}_{\equiv r^{(2)}(x)} \\
    \dot{x}' &= \underbrace{\cP\left( h_2(x)+ H_{2}(x,x)\right)}_{\equiv r^{(3)}(\bar{x})} + \underbrace{\cQ\left(h_2(x)+ H_{2}(x,x)\right)}_{\equiv r^{(4)}(x)}. 
\end{align}
\end{subequations}
Next the terms $r^{(1)}$ and $r^{(2)}$ are computed, as they are needed for the subsequent analysis.  First term of $r^{(1)}$ is treated (see Appendix \ref{sec_lin_dia} for details). Using \eqref{eqn_prj_p},
$$
\cP h_1(x) = \cP(\Phi^+ A f(x)) = \Phi^+ A\Phi \overline{f^\Phi}(\bar{x}) \equiv A^{11} \overline{f^\Phi}(\bar{x}),
$$
and the components are denoted
\BEA
[\cP h_1(x)]_i = \left[A^{11} \overline{f^\Phi}(\bar{x})\right]_i \equiv \alpha_i^{11} \overline{f^\Phi_i}(\bar{x}_i).\label{proj_g2}
\EEA
{In the above, we have defined,
\[\alpha_i^{11} = [A^{11}]_{ii} = [\Phi^+A\Phi]_{ii}.\]}

Similarly, one obtains the expression for $\cP H_{1}(x,x)$ (see Appendix \ref{sec_lin_oti} for details).  Its $i$th component is
\begin{equation}\label{eqn_ph1_0}
[\cP H_{1}(x,x)]_i = \sum_{j \in \bar{\cN}_{i,t}^{(K)}} \sum_{k\in\cV_i}\beta^{11}_{ijk} \overline{f^\Phi_{kj}}(\bar{x}_i,\bar{x}_j) \in \bR^{m_i}, \quad i = 1,\ldots M,
\end{equation}
where $\bar{\cN}_{i,t}^{(K)}$ denotes the set of $K$-hop neighbors of the $i$th coarse-grained node, and
$$
\beta^{11}_{ijk} = \sum_{\ell\in \cV_j}\Phi_{ik}^+ \beta_{k\ell} \Phi_{\ell j},\quad \overline{f^\Phi_{kj}}(\bar{x}_i,\bar{x}_j) = \sum_{\ell\in \cV_j} \Phi_{j\ell}^+ {\bar{f}}_{k\ell}(\bar{x}_i,\bar{x}_j),
$$
for all $k\in\cV_i$.
\begin{rem}\label{rem_hom}
    If the interaction terms are homogeneous within $\cV_i$, i.e.,  ${\bar{f}}_{pj}(\bar{x}_i,\bar{x}_j)={\bar{f}}_{qj}(\bar{x}_i,\bar{x}_j)$ for $\forall p,q\in\cV_i$, then \eqref{eqn_ph1_0} simplifies as,
    \begin{equation}\label{eqn_ph1}
    [\cP H_{1}(x,x)]_i = \sum_{j \in \bar{\cN}_{i,t}^{(K)}} \beta^{11}_{ij} \overline{f^\Phi_{ij}}(\bar{x}_i,\bar{x}_j),
    \end{equation}
    where $\overline{f^\Phi_{ij}}(\bar{x}_i,\bar{x}_j)=\overline{f^\Phi_{kj}}(\bar{x}_i,\bar{x}_j)$ for $\forall k \in\cV_i$, and
    $$
    \beta^{11}_{ij} = \sum_{k\in \cV_i}\beta^{11}_{ijk} = \sum_{k\in \cV_i}\sum_{\ell\in \cV_j}\Phi_{ik}^+ \beta_{k\ell} \Phi_{\ell j}.
    $$
    Using $\otimes$ notation, one can write \eqref{eqn_ph1} compactly as
    $$
    \cP H_{1}(x,x) = B^{11}(t) \otimes \overline{F^\Phi}(\bar{x},\bar{x}),
    $$
    where $[B^{11}(t)]_{ij} = [\Phi^+ B(t) \Phi]_{ij} = \beta^{11}_{ij}$ and {$[\overline{F^\Phi}(\bar{x},\bar{x})]_{ij} = \overline{f_{ij}^\Phi}(\bar{x}_i,\bar{x}_j)$.}

    Note that after coarse-graining new self-interaction terms with strength $\beta_{ii}^{11}$  may emerge.
\end{rem}

The computation of $r^{(2)}$ is similar to $r^{(1)}$, except that \eqref{eqn_prj_q} is used in place of \eqref{eqn_prj_p}.  The only difference is that the residual operator $\cQ$ produces two terms for each of $\cQ h_1 (x)$ and $\cQ H_{1} (x,x)$.  Specifically, the components of the residual are
\begin{subequations}\label{proj_g34}
\begin{align}
[\cQ h_1 (x)]_i &= \alpha^{11}_i \widetilde{f^\Phi_i}(x_i)+  \alpha^{12}_{i}f^\Psi_i(x_i),\label{proj_g3} \\
[\cQ H_{1} (x,x)]_i &= \sum_{j\in\bar{\cN}^{(K)}_{i,t}} \sum_{k\in \cV_i} \left(\beta_{ijk}^{11}  \widetilde{f^\Phi_{kj}}(x_k,\bar{x}_i,x_{\cV_j}) + \beta_{ijk}^{12} f^\Psi_{kj}(x_k,x_{\cV_j}) \right), \label{proj_g4}
\end{align}
\end{subequations}
where we have denoted
$$
\alpha_i^{12} =[\Phi^+A\Psi]_i,\quad \beta_{ijk}^{12} = \sum_{\ell\in \cV_j} \Phi_{ik}^+ \beta_{k\ell}\Psi_{\ell j},
$$
and
\begin{align*}
\widetilde{f_{kj}^\Phi} 
(x_k,\bar{x}_i,x_{\cV_j}) &=  \sum_{\ell\in \cV_j}\Phi_{j\ell}^+ (f_{k\ell}(x_k,x_\ell)) - \bar{f}_{k\ell}(\bar{x}_i,\bar{x}_j)),\notag  \\
f^\Psi_{kj}(x_k,x_{\cV_j}) &= \sum_{\ell\in \cV_j} \Psi^+_{j\ell}f_{k\ell}(x_k,x_\ell),\notag
\end{align*}
for all $k\in\cV_i$, with $x_{\cV_j} = \{x_\ell: \ell \in \cV_j\}$. See Appendix~\ref{sec_lin_oti} for the detailed derivation of these interaction terms.
Following the same argument as above, one can deduce $r^{(3)}$ and $r^{(4)}$.
In the new coordinates $x =\Phi\bar{x}+ \Psi x'$. By chain rule, we have,
$
\nabla_x = (\Phi^+)^\top \nabla_{\bar{x}}  + (\Psi^+)^\top \nabla_{x'},
$ and subsequently, the infinitesimal generator of the Koopman operator is given as,
\begin{eqnarray}
\mathcal{L} &=& \dot{x} \cdot \nabla_x = \dot{x}^\top ((\Phi^+)^\top \nabla_{\bar{x}}  + (\Psi^+)^\top \nabla_{x'}) \notag \\ &=& (\Phi^+\dot{x})^\top\nabla_{\bar{x}} + (\Psi^+\dot{x})^\top\nabla_{x'} 
= \dot{\bar{x}}\cdot\nabla_{\bar{x}} + \dot{x}' \cdot\nabla_{x'} \notag \\ &=&  (r^{(1)}(\bar{x})+r^{(2)}(x))\cdot  \nabla_{\bar{x}} + (r^{(3)}(\bar{x})+r^{(4)}(x))\cdot \nabla_{x'}.
\label{LE}
\end{eqnarray}

This change of coordinate ensures the following identities,
\BEA
    \cP r^{(i)}(\bar{x}) &=& r^{(i)}(\bar{x}),\quad i=1,3 \quad \text{ and } \quad \cP r^{(j)}(x) = 0, \quad j=2,4,\label{projid}
\EEA
to ease the computations in the rest of this paper. It is important to note that $r^{(1)}$ and $r^{(2)}$ depend linearly on 
the topology of the coarse-grained or resolved variables, $B^{11}(t) =(\beta^{11}_{ij}(t))$. To emphasize this linear parameter dependence, we will write $r^{(i)} \equiv r^{(i)}(\bar{x};B^{11}(t))$ for $i=1,2$. 

\subsubsection{Examples}

\begin{example}[Kuramoto example, continued]\label{eg:kuramoto_2}
Recall that in this example, nodal state dimension is $n_i=2$ and total state dimension is $N_x = \sum_{i=1}^N n_i = 2N$.
Following the coarse-graining strategy in Sec. \ref{sec_cg}, we choose to divide the nodes as $\cV=\bigcup_{i=1}^M\cV_i$.  Without loss of generality, let the first $|\cV_1|$ nodes belong to $\cV_1$, the next $|\cV_2|$ nodes belong to $\cV_2$, etc. For the specific topology represented in Figure~\ref{kuramoto_topo}, we have divided every 4 consecutive nodes into one group, so $\cV_1=\{1,2,3,4\}$, $\cV_2=\{5,6,7,8\}$, $\cdots$, $\cV_5=\{17,18,19,20\}$.  Each group produces one node after the coarse-graining, resulting in a total of $M=5$ nodes.
Next, for group $\cV_i$ define an averaged state
\begin{equation}\label{eqn:obs}
    \bar{x}_i := \frac{1}{2|\cV_i|} \sum_{j \in \cV_i} (x_{1,j}+x_{2,j}) \in \bR^{m_i}, \quad \quad i=1,\ldots, M.
\end{equation}
The observable of interest is
$$
\bar{x} = (\bar{x}_1, \ldots, \bar{x}_M) \in \bR^{n}.
$$
Following earlier notations, we have chosen $m_i = 1$ as the dimension of the resolved state of one node and $n=\sum_{i=1}^M m_i = M$ as the total dimension of the resolved states.  A particular instance of coarse-graining is provided in the numerical results section, in which a specific graph topology is given.

The projection matrices associated with $\bar{x}$ are given by,
\[
\Phi = \begin{pmatrix} \Phi_1 \\ & \Phi_2 \\ & & \ddots \\ & & & \Phi_M \end{pmatrix},\ \Phi^+ = \begin{pmatrix} \Phi_1^+ \\ & \Phi_2^+ \\ & & \ddots \\ & & & \Phi_M^+ \end{pmatrix},
\]
where $\Phi \in \bR^{N_x\times n}$, $\Phi^+ \in \bR^{n\times N_x}$, and
\[
\Phi_i^\top = \begin{pmatrix}1 & 1 & \ldots & 1 \end{pmatrix}\in\bR^{2|\cV_i|},\ \Phi_i^+=\frac{1}{2|\cV_i|}\Phi_i^\top.
\]
Here, we note that $ R\equiv |\Phi|_2 = \max_i \sqrt{2|\cV_i|}$.
Subsequently, since the interaction terms $f_{ij}$ in this example are homogeneous by the definition in Remark \ref{rem_hom}, it is sufficient to compute $B^{11}$ after coarse-graining,
\begin{equation}\label{eqn:adj}
    B^{11} = \Phi^+\begin{pmatrix} 0 & \beta_{12} & \ldots & \beta_{1N} \\
    \beta_{12} & 0 & \ldots & \beta_{2N} \\
    \vdots &  & \ddots & \vdots \\ \beta_{N1} & \beta_{N2} & \ldots & 0
    \end{pmatrix}\Phi \equiv \begin{pmatrix} \beta^{11}_{11} & \beta^{11}_{12} & \ldots & \beta^{11}_{1M} \\
    \beta^{11}_{12} & \beta^{11}_{22} & \ldots & \beta^{11}_{2M} \\
    \vdots &  & \ddots & \vdots \\ \beta^{11}_{M1} & \beta^{11}_{M2} & \ldots & \beta^{11}_{MM}
    \end{pmatrix},
\end{equation}
where
$$
\beta_{ij}^{11} =\frac{1}{2|\cV_i|}\sum_{k\in \cV_i,\ell\in  \cV_{j}}  \kappa_{k\ell},\quad (k,\ell)\in\bar{\cE}.
$$
Via a similar procedure, one can find
$$
\alpha_{i}^{11} = 0,\quad \beta_{ii}^{11} = \frac{1}{2|\cV_i|} \sum_{k,\ell\in\cV_i} \kappa_{k\ell}.
$$
Above results indicate that under the chosen projection, the self-interaction in the coarse-grained Kuramoto system is only due to the interactions between the nodes within the same group, instead of the autonomous terms driven by $\alpha_i$.
\end{example}

\begin{example}[Power system example, continued]\label{eg:power_2}
In this example, we are interested to identify the dynamics of the observable $\vy_i$ in both the DER and non-DER nodes. Since the observable does not require a coarse-graining (or averaging) over different nodes, the projection operator is simplified, that is $\cP f(\vx,\vy) = f(0,\vy)$, for any function $f$. Since $\Phi$ and $\Psi$ are identities on the $\vy$ components and their orthogonal directions, correspondingly, we have
\[
{\alpha^{11}_i = \bar{\vY}^{-1}_{ii}}
\]
for the DER node and zero for the non-DER node. Correspondingly,
\[
{\beta^{11}_{ij} = \bar{\vY}^{-1}_{ij}}
\]
for both the DER and non-DER nodes.
\end{example}


\subsection{Mori-Zwanzig Expansion}


Suppose that $g(x) = \bar{x}$, then $u(x,t) = e^{t\cL} g(x) = e^{t\cL} \bar{x} $, so the PDE in \eqref{PDE} can be equivalently written as,
\begin{equation}
    \ppf{}{t}e^{t\cL} \bar{x} = \cL e^{t\cL} \bar{x}.\notag
\end{equation}
We should clarify that the initial condition of the PDE in \eqref{PDE} corresponds to the system of ODEs in \eqref{eqn_dec}, with initial condition $x(0) = x$.

Using the Dyson's formula, one can write the Mori-Zwanzig (MZ) expansion \cite{mori:65,zwanzig:73,chk:00},
\begin{equation}
\ppf{}{t}e^{t\cL} \bar{x} =  \underbrace{e^{t\cL}\cP\cL \bar{x}}_{\text{Markovian}} + \underbrace{\int_{0}^t e^{(t-s)\cL} \cP\cL e^{s\cQ\cL} \cQ\cL \bar{x} ds}_{\text{non-Markovian}}+ \underbrace{e^{t\cL}\cQ\cL\bar{x}, }_{\text{orthogonal dynamics}}\notag
\end{equation}
where $\cQ = \cI-\cP$. Projecting to the range of $\cP$, we obtain,
\begin{equation}\label{eqn:dyn}
    \ddt{\bar{x}(t)} = \ppf{}{t}\cP e^{t\cL} \bar{x} =  \underbrace{\cP e^{t\cL}\cP\cL \bar{x}}_{\text{Markovian}} + \underbrace{\int_{0}^t \cP e^{(t-s)\cL} \cP\cL e^{s\cQ\cL} \cQ\cL \bar{x} ds}_{\text{non-Markovian}}.
\end{equation}
Using \eqref{projid}, the Markovian term is simply,
\begin{align}
\cP e^{t\cL}\cP\cL \bar{x} &= \cP e^{t\cL} \cP r(x)\cdot \nabla_{x} \bar{x}  \notag \\
&= \cP e^{t\cL} \cP (r^{(1)}(\bar{x})+ r^{(2)}(x) )\cdot \nabla_{\bar{x}} \bar{x}  \notag \\
&= \cP e^{t\cL}r^{(1)}(\bar{x}) = r^{(1)}(\cP e^{t\cL}\bar{x}) = r^{(1)}(\bar{x}(t)).\notag
\end{align}
The non-Markovian term depends on the solution of the orthogonal dynamics,
\begin{equation}
w(x,t) = e^{t\cQ\cL} \cQ\cL\bar{x} = e^{t\cQ\cL} r^{(2)}(x).\notag
\end{equation}

One of the issues in leveraging the MZ expansion is that  solving the integro-differential equation in \eqref{eqn:dyn} is computationally more expensive than solving the full ODE system in \eqref{eqn_dec}. This issue is mainly due to the
lack of explicit expression for the solution operator $e^{t\cQ\cL}$ of the orthogonal dynamics.

Since our goal is to design a machine learning algorithm, or more specifically, to determine a graph neural network architecture that encodes the graph topology, we would like to verify how the non-Markovian term depends on the parameter $B^{11}$ that appears in $r^{(1)}$ and $r^{(2)}$. While this is a difficult task since we don't have an explicit solution for the orthogonal dynamics, we can obtain an approximation of how the non-Markovian term depends on these parameters by inspecting the leading order terms in the Taylor's expansion of the orthogonal dynamics for small $t>0$, 
\begin{equation}
e^{t\cQ\cL} = I + t\cQ\cL + O(t^2).
\end{equation}
While it is unclear whether such an expansion is valid, the leading order terms have been considered for approximating the orthogonal dynamics \cite{chk:00,stinis2007higher}. As we stated above, we will only inspect how the memory terms depend on the parameters $B^{11}$ in the leading order terms. Mathematically, the order-$s$ term in this expansion operates on an observable that is also a function of $C^{s+1}(\cA,\bR^n)$. 

Based on this expansion, the leading order non-Markovian term is given as,
\BEA
\begin{aligned}
\int_{0}^t \cP e^{(t-s)\cL} \cP\cL e^{s\cQ\cL} \cQ\cL \bar{x} ds &= \underbrace{\int_{0}^t \cP e^{(t-s)\cL} \cP\cL \cQ\cL \bar{x} ds }_{=I_1}\\\label{eqn_mz_exp}
&+ \underbrace{\int_{0}^t s\cP e^{(t-s)\cL} \cP\cL \cQ\cL\cQ\cL \bar{x} ds}_{= I_2} + O(t^3).
\end{aligned}
\EEA
Since $\cQ\cL\bar{x} = r^{(2)}(x)$, 
\begin{align}
\cP\cL\cQ\cL\bar{x} &= \cP\cL r^{(2)}(x) = \cP \left((r^{(1)}+r^{(2)})\cdot \nabla_{\bar{x}}r^{(2)}(x) + (r^{(3)}+r^{(4)})\cdot \nabla_{x'} r^{(2)}(x)\right) \notag \\
&= \cP\left((r^{(1)}+r^{(2)})^\top  \nabla_{\bar{x}}r^{(2)}+ (r^{(3)}+r^{(4)})^\top \nabla_{x'} r^{(2)}(\bar{x};B^{11})\right), \notag \\
&= (r^{(1)}(Px)+\underbrace{r^{(2)}(Px)}_{=0})^\top \nabla_{\bar{x}}r^{(2)}(Px)+ \underbrace{(r^{(3)}+r^{(4)})^\top \nabla_{x'} r^{(2)}(\bar{x};B^{11})}_{=0}, \notag \\
&= r^{(1)}(\bar{x})\cdot  \nabla_{\bar{x}}r^{(2)}(\bar{x}).\notag
\end{align}
which has a quadratic dependence on $B^{11}$.










Since $B^{11}$ implicitly encodes the topology of the pair-wise interaction between coarse-grained variables among the $K$-hop neighbors, the quadratic dependence on $B^{11}$ in the leading order expansion of the non-Markovian term motivates the use of a class of model that allows for pair-wise interactions of $2K$-hop neighbors.

\subsection{Memory length}\label{sec:memory}

In this section, our goal is to quantify the required memory length when using a model that accounts for the interaction of $2K$-hop neighbors. Particularly, we will deduce a minimum memory length based on a scaling argument of the leading-order of the MZ memory term, $I_1$.

To facilitate the analysis below, we should clarify that the norm $\|\cdot\|$ is associated to the Banach space $C(\cA,\bR^m)$ for any $m\in \mathbb{N}$. The leading order non-Markovian term is,
\[
I_1 = \int_0^t r^{(1)}(\bar{x}(t-s)) \cdot  \nabla_{\bar{x}}r^{(2)}(\bar{x}(t-s)) \,ds
\]
where
\begin{align}
r^{(1)}_i(\bar{x}) &= \alpha_i^{11} \overline{f^\Phi_i}(\bar{x}_i) + \sum_{j \in \bar{\cN}_{i,t}^{(K)}} {\sum_{k\in \cV_i} \beta^{11}_{ijk} \overline{f^\Phi_{kj}}}(\bar{x}_i,\bar{x}_j) \notag \\
r^{(2)}_i(x) &= \alpha^{11}_i \widetilde{f^\Phi_i}(x_i)+  \alpha^{12}_{i}f^\Psi_i(x_i) \notag \\ &\quad + \sum_{j\in\bar{\cN}^{(K)}_{i,t}} {\sum_{k\in \cV_i} \left(\beta_{ijk}^{11}  \widetilde{f^\Phi_{kj}}(x_k,\bar{x}_i,x_{\cV_j}) + \beta_{ijk}^{12} f^\Psi_{kj}(x_k,x_{\cV_j}) \right).} \notag 
\end{align}
In the derivation below, we will use the matrix max norm to denote $|A| = \max_{ij} |a_{ij}|$ for any matrix $A$ and $|\cdot|_2$ as the usual matrix 2-norm. Since the vector fields $f_i,f_{ij} \in C^1(\cA)$, and, $|\Phi|\leq |\Phi|_2 { =R<\infty, |\Psi|\leq |\Psi|_2 =R<\infty}$, we
have $\|\overline{f^\Phi}\|=\|\cP( f^\Phi)\| = |{\Phi^+}|\|\cP f\| \leq {R^{-1}}\|f\| < \infty$. With similar argument, $\|\widetilde{f^\Phi}\|,\|f^\Psi\| \leq \infty$ {and their upper bounds are of order-$R^{-1}$. For ease of discussion below, define the Lipschitz constant,
\[
R^{-1}L\equiv \max\left\{\|\nabla f_{i}\|,\|\nabla F_{ij}\|\right\},
\]
where $f_i\in \{\overline{f^\Phi_i}, \widetilde{f^\Phi_i}, f^\Psi_i\}$, $F_{ij} \in \{\overline{f^\Phi_{ij}}, \widetilde{f^\Phi_{ij}},f^\Psi_{ij}\}$.}

\begin{prop}\label{prop_tde} Let $|\alpha_i| = O(1)$ and $|\beta_{ij}| \in O(\epsilon^p)$ for $j\in \bar{\cN}_{i,t}^{[p]}$. 
If $d\epsilon> 1/2$, then
\[
|I_1| = O \left(\frac{LT\left( d\epsilon\right)^{2} }{R^2(1-d\epsilon)^2}\right).
\]
\end{prop}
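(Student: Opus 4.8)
The plan is to bound $|I_1|$ by pulling the integral inside the norm and estimating the integrand $\|r^{(1)}(\bar x(t-s))\cdot \nabla_{\bar x} r^{(2)}(\bar x(t-s))\|$ uniformly in $s\in[0,t]$, then multiplying by the memory length $T$. By submultiplicativity of the norm on $C(\cA,\bR^m)$, $\|r^{(1)}\cdot\nabla_{\bar x} r^{(2)}\|\leq \|r^{(1)}\|\,\|\nabla_{\bar x} r^{(2)}\|$, so the two factors can be treated separately.

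First I would bound $\|r^{(1)}\|$. Componentwise, $r^{(1)}_i$ is the sum of an autonomous term $\alpha_i^{11}\overline{f^\Phi_i}$ and the interaction sum $\sum_{j\in\bar\cN^{(K)}_{i,t}}\sum_{k\in\cV_i}\beta^{11}_{ijk}\overline{f^\Phi_{kj}}$. Using $\|\overline{f^\Phi}\|,\|\widetilde{f^\Phi}\|,\|f^\Psi\|$ are all $O(R^{-1})$ (as established just before the proposition) together with $|\alpha_i^{11}|,|\alpha_i^{12}| = O(1)$ (which follows from $|\alpha_i|=O(1)$ and the bounds on $\Phi^+,\Psi^+$), the autonomous contribution is $O(R^{-1})$ and is dominated. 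For the interaction sum I would split the $j$-sum over exact hop shells: $\bar\cN^{(K)}_{i,t} = \bigcup_{p=0}^K \bar\cN^{[p]}_{i,t}$, with $|\bar\cN^{[p]}_{i,t}| = O(d^p)$ and $|\beta^{11}_{ijk}|=O(\epsilon^p)$ for $j$ in the $p$-th shell (the latter inherited from $|\beta_{ij}|=O(\epsilon^p)$ via $\beta^{11}_{ijk}=\sum_{\ell}\Phi^+_{ik}\beta_{k\ell}\Phi_{\ell j}$ and the boundedness of $\Phi,\Phi^+$). The inner sum over $k\in\cV_i$ contributes a constant factor. Hence each shell contributes $O(d^p\epsilon^p R^{-1})$, and summing the geometric series $\sum_{p=0}^K (d\epsilon)^p$ — which, since $d\epsilon>1/2$ and we are interested in the dominant behavior, is controlled by $\sum_{p=0}^\infty(d\epsilon)^p$ only when $d\epsilon<1$; more precisely one writes $\sum_{p=0}^K(d\epsilon)^p = \frac{(d\epsilon)^{K+1}-1}{d\epsilon-1}$ and absorbs this into the $O(\cdot)$ — gives $\|r^{(1)}\| = O\!\left(\frac{d\epsilon}{R(1-d\epsilon)}\right)$, reading $|1-d\epsilon|$ in the denominator. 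The same combinatorics applied to $r^{(2)}_i$, whose structure is identical except each $\overline{f^\Phi_{kj}}$ is replaced by $\widetilde{f^\Phi_{kj}}$ or $f^\Psi_{kj}$ and $\beta^{11}$ by $\beta^{11}$ or $\beta^{12}$, plus the extra $\alpha^{12}_i f^\Psi_i$ term, gives $\|\nabla_{\bar x} r^{(2)}\| = O\!\left(\frac{L\,d\epsilon}{R(1-d\epsilon)}\right)$: the gradient brings down a Lipschitz factor $R^{-1}L$ in place of one $R^{-1}$, and differentiating the $\bar x$-dependence does not change the hop combinatorics since each summand depends on at most $\bar x_i,\bar x_j$.

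Multiplying the two bounds yields $\|r^{(1)}\cdot\nabla_{\bar x}r^{(2)}\| = O\!\left(\frac{L(d\epsilon)^2}{R^2(1-d\epsilon)^2}\right)$ uniformly in $s$, and then $|I_1|\leq \int_0^t \|r^{(1)}\cdot\nabla_{\bar x}r^{(2)}\|\,ds \leq T\cdot O\!\left(\frac{L(d\epsilon)^2}{R^2(1-d\epsilon)^2}\right)$, using that the memory window has length $T$ (i.e. $t\leq T$). This is exactly the claimed bound.

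The main obstacle I anticipate is the geometric-series step and its interaction with the hypothesis $d\epsilon>1/2$: the series $\sum_{p=0}^K(d\epsilon)^p$ is only bounded by $\frac{1}{1-d\epsilon}$ when $d\epsilon<1$, so I would need to be careful about whether the intended regime is $1/2<d\epsilon<1$ (where the bound $\frac{(d\epsilon)^2}{(1-d\epsilon)^2}$ is genuinely an upper bound and the condition $d\epsilon>1/2$ merely ensures this dominates the $O(R^{-1})$ autonomous term and the lower-order geometric terms) versus treating $K$ as finite and absorbing $(d\epsilon)^{K}$-type factors into the constant. A secondary point requiring care is justifying that $\bar x(t-s)$ stays in the compact forward-invariant set $\cA$ so that all the $C^1$ bounds and the Lipschitz constant $L$ apply uniformly along the trajectory; this follows from the forward-invariance of $\cA$ assumed in Section~\ref{sec2}, but should be invoked explicitly. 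Finally, I would double-check that the homogeneity simplification of Remark~\ref{rem_hom} is not needed here — the bound should go through with the general double sum $\sum_{k\in\cV_i}$ since $|\cV_i|$ is treated as $O(1)$.
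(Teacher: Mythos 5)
Your proposal is correct and follows essentially the same route as the paper's proof: the paper expands the product $r^{(1)}\cdot\nabla_{\bar{x}}r^{(2)}$ into four cross-terms (autonomous $\times$ autonomous, mixed, and interaction $\times$ interaction), bounds each via the hop-shell count $|\bar{\cN}_{i,t}^{[k]}|$ and the $\epsilon^k$ decay, and sums the resulting geometric series, which is exactly your factor-then-multiply estimate written distributively. Your flagged concerns (the $d\epsilon<1$ caveat on the geometric series, absorbing $(d\epsilon)^K$ factors into the constant, and $d\epsilon>1/2$ serving only to make the quadratic term dominant) are handled in the paper in precisely the way you anticipate.
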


\begin{proof}
Let $a \in \{\alpha^{11}, \alpha^{12}\}$ and $b\in\{\beta^{11}, \beta^{12}\}$. 
We first note that, for $j\in \bar{\cN}_{i,t}^{[p]}$,
$$
|b_{ijk}|= \begin{cases}
\left|\sum_{\ell\in\cV_j} \Phi_{ik}^+ \beta_{k\ell} \Phi_{\ell j}\right| =O(\epsilon^p), & \text{ if } b_{ijk} = \beta_{ijk}^{11},  \\[8pt]
\left|\sum_{\ell\in\cV_j} \Phi_{ik}^+ \beta_{k\ell} \Psi_{\ell j}\right| =O(\epsilon^p), & \text{ if } b_{ijk} = \beta_{ijk}^{12}.
\end{cases}
$$
Hence for $\forall j\in \bar{\cN}_{i,t}^{[p]}$
\[
\left|\sum_{k\in \cV_i}b_{ijk}F_{kj}\right| \leq
\|F_{\cV_i,j}\| \sum_{k\in \cV_i} |b_{ijk}|  = O(\epsilon^pR^{-1}),
\]
where $\|F_{\cV_i,j}\| = \max_{k\in \cV_i}|F_{kj}|$.
With similar arguments, one can deduce that,
\[
\left|\sum_{k\in \cV_i}b_{ijk}\partial_{\bar{x}_i}F_{kj}\right| \leq \|F_{\cV_i,j}\| \sum_{k\in \cV_i} |b_{ijk}| = O(\epsilon^p R^{-1}L).
\]

Thus,
\BEA
s_{11}&:=&\left\|a_i f_i a_i \partial_{\bar{x}_i} f_i\right\| = O({R^{-2}}L), \notag\\
s_{12}(p)&:=& \left\|a_i f_i {\left(\sum_\kappa b_{ij\kappa} \partial_{\bar{x}_i} F_{\kappa j}\right)}\right\|  =O(\epsilon^{p}{R^{-2}}L), \quad \forall j \in \bar{\cN}_{i,t}^{[p]}, \notag \\
s_{21}(p)&:=&
\left\| {\left(\sum_\kappa b_{ij\kappa} F_{\kappa j}\right)} a_{i} \partial_{\bar{x}_i} f_i\right\|  = O(\epsilon^{p}{R^{-2}}L), \quad \forall j \in \bar{\cN}_{i,t}^{[p]}, \notag\\
s_{22}(p,q)&:=&\left\|{\left(\sum_\kappa b_{ij\kappa} F_{\kappa j}\right)} {\left(\sum_\kappa b_{i\ell \kappa} \partial_{\bar{x}_i} F_{\kappa\ell}\right)} \right\|  = O(\epsilon^{p+q}{S^{-2}}L),  
\notag \\ 
&& \hspace{1.5in}\forall j \in \bar{\cN}_{i,t}^{[p]}, \forall \ell \in \bar{\cN}_{i,t}^{[q]},\notag
\EEA
{where the terms $s_{12}(0)$, $s_{21}(0)$, and $s_{22}(0,0)$ represent the strengths of new self-interactions due to the memory integral.} Since 
\begin{eqnarray}
\sum_{j \in \bar{\cN}_{i,t}^{(K)}}{ \left(\sum_\kappa b_{ij\kappa} F_{\kappa j}\right)} &=& \sum_{{k=0}}^K\sum_{j \in \bar{\cN}_{i,t}^{[k]}} {\left(\sum_\kappa b_{ij\kappa} F_{\kappa j}\right)}, \notag 
\end{eqnarray}
and $|\bar{\cN}_{i,t}^{[k]}| = O((\frac{Md}{N})^k)$, where $M/N<1$, 
we have
\begin{align}
\|r^{(1)} \cdot \nabla_{\bar{x}} r^{(2)} \| &\leq  s_{11} + {s_{12}(0) + s_{21}(0) + s_{22}(0,0)} \notag \\
&\quad +  C_1 \sum_{k=1}^K \left(\frac{Md}{N}\right)^k (s_{12}(k)+s_{21}(k) )\notag \\
&\quad + C_2 \sum_{p,q=1}^K \left(\frac{Md}{N}\right)^{p+q} s_{22}(p,q) \notag \\
&= O({R^{-2}}L) + O \left({R^{-2}}L\sum_{k=1}^K\left(d\epsilon\right)^k\right) + O\left({R^{-2}}L\sum_{p,q=1}^K \left(d\epsilon\right)^{p+q}\right) \notag \\
&= O({R^{-2}}L)+O \left({R^{-2}}L(d\epsilon)\frac{1-(d\epsilon)^{K-1}}{1-d\epsilon} \right) \notag \\ &\quad\quad + O\left({R^{-2}}L
\frac{(d\epsilon)^2(1-(d\epsilon)^K)^2}{(1-d\epsilon)^2}\right)\notag \\
&= O({R^{-2}}L) +  O\left({R^{-2}}L\frac{d\epsilon}{1-d\epsilon}\right) + O\left({R^{-2}}L\frac{ (d\epsilon)^2}{(1-d\epsilon)^2}\right).\notag
\end{align}
Thus, it is clear that the third term dominates if $d\epsilon>1/2$, and the proof is complete after integrating this bound.
\end{proof}

\begin{rem}\label{rem_de}
If $d\epsilon> 1/2$, the proposition above suggests that if one considers a $2K$-hop model, then the leading
order memory term in the MZ equation 
is appropriately accounted with time lag $T\sim \frac{{R^2}(1-d\epsilon)^2}{L(d\epsilon)^2}$. On the other hand, if $d\epsilon \ll 1/2$, the leading order memory term need a longer time lag $T\sim {R^2}L^{-1}$ to be appropriately accounted. {In the Kuramoto example, since $R^2=2\max_{i=1,\ldots,M}|\cV_i|$, it is intuitive that longer memory is needed when each coarse-grained node is formed by aggregating a larger number of nodes.}
\end{rem}

\section{Graph Neural Network}\label{sec:gnn}

Informed by the MZ expansion, a graph neural network architecture is developed to implement the reduced-order model \eqref{eqn:dyn} based on the coarsened graph topology.

\paragraph{Strategy of GNN Modeling.}
The MZ calculation reveals that the rate of change in $\bx$ is approximately a function of the past $T$ measurements and graph topologies, plus the coarse-grained topology at the next step.  We use a GNN\CORR{, denoted by $F_G$,} to learn this relation,
\begin{equation}\label{eqn_gnn}
    \dot{\bx} = F_G(\bx_t,\bx_{t-1},\cdots,\bx_{t-T+1}; \bar{\cG}_t, \bar{\cG}_{t-1}, \cdots, \bar{\cG}_{t-T+1}; \bar{\cG}_{t+1};\vtQ),
\end{equation}
where \CORR{$\vtQ$ represents the learnable parameters; the detailed form of $F_G$ is provided near the end of this section}.  Subsequently the next partial measurement is predicted as
\begin{equation*}
    \bx_{t+1} = \bx_t + \Dt \dot{\bx},
\end{equation*}
where $\Dt$ is the time step size.

In the following, we first present the building blocks of the proposed GNN model, and then its overall architecture.

\paragraph{Weighted Graph Laplacian.}
To incorporate the interaction matrices $\alpha_i^{11}$ and $\beta_{ij}^{11}$ into the GNN model, the graph is augmented with edge weights, i.e., each edge $(i,j)$ is assigned a real value $\bar{w}_{ij}$.  The collection of weights is denoted by $\bar{\cW}=\{\bar{w}_{ij}|(i,j)\in\bar{\cE}\}$; the graph with edge weights is denoted by $\bar{\cG}=(\bar{\cV},\bar{\cE},\bar{\cW})$.  The computation of $w_{ij}$ from $\alpha_i^{11}$ and $\beta_{ij}^{11}$ is problem-dependent, and two examples are given below.

\begin{example}[Kuramoto example, continued]\label{eg:kuramoto_3}
Since $\alpha_i^{11}$ and $\beta_{ij}^{11}$ are already scalars, we simply choose
\CORR{
\begin{equation}\label{eqn_kura_wij}
\bar{w}_{ij} = \left\{\begin{array}{ll}
    \beta_{ii}^{11}, & i=j, \\
    \beta_{ij}^{11}, & i\neq j,\ j\in\bar{\cN}_{i,t}, \\
    0, & \text{Otherwise.}
\end{array}
\right.
\end{equation}
}\end{example}

\begin{example}[Power system example, continued]\label{eg:power_3}
The coefficient matrices $\alpha_i^{11}$ and $\beta_{ij}^{11}$ are proportional to the blocks of $\bar{\vY}^{-1}$.  However, in power system modeling, only $\bar{\vY}$ is available and it is inconvenient to obtain $\bar{\vY}^{-1}$.  To emulate the decreasing trend of $\bar{\vY}^{-1}$ using $\bar{\vY}$, the weights are defined as
\begin{equation}\label{eqn_pow_wij}
\bar{w}_{ij} = \exp(-\eta|\bar{Y}_{ij}|^{2}),
\end{equation}
where $\eta$ is a user-specified parameter that tunes the range of $w_{ij}$.
\end{example}

Furthermore, as needed by the GNN model, we introduce the \CORR{weighted graph Laplacian
\begin{equation}
    \vL=\vD-\vW,
\end{equation}}
where $\vW$ is the weight matrix
\begin{equation}
    [\vW]_{ij} = \left\{\begin{array}{ll}
        \bar{w}_{ij}, & (i,j)\in\bar{\cE}, \\
        0, & \text{otherwise,}
    \end{array}
    \right. \label{Graphweight}
\end{equation}
and $\vD$ is a diagonal matrix with $[\vD]_{ii}=\sum_{j\in\bar{\cN}_i}\bar{w}_{ij}$.

\paragraph{Message Passing (MP) Mechanism.}
The MP mechanism is the cornerstone for many GNN architectures, which consists of multiple consecutive MP steps \cite{Hamilton2020}.
Consider a graph $\bar{\cG}=(\bar{\cV},\bar{\cE},\bar{\cW})$ of $M$ nodes and a vector of features $h_v^{(0)}\in\bR^{D_0}$ at each node $v\in\bar{\cV}$, one MP step is defined as
\begin{subequations}
\begin{align}\label{eqn:ebd}
  &\text{Aggregate:}\quad m_v^{(0)} = M_{ag}\left(\{h_u^{(0)}\ |\ u\in\bar{\cN}_{v,t}\},\bar{\cW}\right), \\
  &\text{Update:}\quad h_v^{(1)} = M_{up}\left(h_v^{(0)},m_v^{(0)},\bar{\cW}\right),
\end{align}
\end{subequations}
where $M_{ag}$ and $M_{up}$ are nonlinear mappings, e.g., neural networks, and $m_v^{(0)}$ denotes the information aggregated from \CORR{$\bar{\cN}_v$, $1$-hop neighbors of node $v$.} Hence the new vector of features at node $v$, $h_v^{(1)}\in\bR^{D_1}$, is a function of the features from its 1-hop neighbors.  In other words, one MP step corresponds to the information exchange between 1-hop neighbors.  The information exchange between $k$-hop neighbors is achieved using $k$ consecutive MP steps.

\begin{example}\label{MPexample}
    One of the simplest MP step is the multiplication with the \CORR{weighted} graph Laplacian.  Denote the vectors of features at all nodes as
    \begin{equation}
        H^{(0)}=[h_1^{(0)}, h_2^{(0)}, \cdots, h_M^{(0)}]^\top \in\bR^{M\times D_0}.
    \end{equation}
 Then the aggregation and updating are, respectively,
    \begin{align*}
        m_v^{(0)} &= \sum_{j\in\bar{\cN}_{v,t}} \vL_{vj}h_j^{(0)}, \\
        h_v^{(1)} &= \vL_{vv} h_v^{(0)} + m_v^{(0)},
    \end{align*}
\CORR{which can be written in a compact form as   $H^{(1)}=\vL H^{(0)}$. Effectively, this operation is a generalization of the convolution operator on graphs, where the graph Laplacian matrix serves as the filter in the usual convolution operation. Furthermore, $\vL^k H^{(0)}$ would result in the interaction among $k$-hop neighbors and hence is equivalent to $k$ MP steps. 

A popular polynomial filter \cite{Defferrard2017}, which we will use in our numerical example, is the Chebyshev polynomial of the transformed Laplacian matrix, 
\[
T_s(\tilde{\vL}) = \sum_{i=0}^s w_i \tilde{\vL}^i,
\]
where $\{w_i\}_{i=0}^s$ are the coefficients corresponding to Chebyshev polynomials of degree-$s$, and the matrix $\tilde{\vL} = (2/\lambda_{\max})\vL-\vI$, is a transformed version of the weighted graph Laplacian, where $\lambda_{\max}$ is the largest eigenvalue of $\vL$.  The transformation ensures that the range of eigenvalues of $\tilde{\vL}$ is $[-1,1]$ and matches the domain of Chebyshev polynomials. Since the matrix $T_s(\tilde{\vL})$ contains up to $s$th power of $\tilde{\vL}$, hence it is clear that one $s$th order Chebyshev convolution, $T_s(\tilde{\vL})H^{(0)}$, accounts for the interaction of $s-$hop neighbors, which is equivalent to $s$ MP steps. 
}
\end{example}

\paragraph{Graph Convolutional Layers (GCLs).}
In this study, the MP mechanism is implemented using the GCLs with the ChebConv network \cite{Defferrard2017}, which performs the MP aggregation and updating over all nodes simultaneously.


Collecting the vectors of features at all nodes as $H^{(j)}=[h_1^{(j)}, h_2^{(j)}, \cdots, h_M^{(j)}]^\top\in\bR^{M\times D_j}$, we define the $S$th order ChebConv-based graph convolution as
\begin{equation}\label{eq:cheb1}
    H^{(j+1)} = \sigma\left( \sum_{s=0}^{S}T_s(\tilde{\vL})H^{(j)}\Theta_s^{(j)} \right) \equiv f_{S}(H^{(j)},\bar{\cG};\Theta^{(j)}),
\end{equation}
where $\sigma$ is a nonlinear element-wise activation function, and $\Theta^{(j)}=\{\Theta_s^{(j)}\in\bR^{D_j\times D_{j+1}}\}_{s=0}^S$ are learnable parameters. \CORR{In \eqref{eq:cheb1}, we have defined the map $f_{S}$ for convenience of the following discussion. Importantly, this map takes  $\bar{\mathcal{G}}$ as an input and encodes the graph information, especially the weights in \eqref{Graphweight}, in the transformed Graph Laplacian matrix, $\tilde{\vL}$, as defined in Example~\ref{MPexample}, and employs a ChebConv that performs $S$ MP steps. We should point out that $f_S$ is flexible in taking information of the coarse-grained graphs $\bar{\cG}_t$ at arbitrary time $t$ and thus allowing for modeling of time-varying topologies.

Our analysis in Section~\ref{sec3} suggests that the Graph Neural Network model should at least allow for  $2K$-hops of interactions. One way to realize it is to use the single layer model in \eqref{eq:cheb1} with $S=2K$, or to use $N_C-$layers of ChebConv network in \eqref{eq:cheb1}, 
\begin{equation}
G_{2K,t}(H^{(0)},\bar{\cG}_t) = f_S^{(N_C)}\circ \ldots \circ f_S^{(1)} (H^{(0)},\bar{\mathcal{G}}_t), \text{ where } S\times N_C=2K,\label{G2K}
\end{equation}
which is what we use in the numerical experiments in Section~\ref{sec5}. Here, we note that $G_{2K,t}$ depends on the parameters $\Theta^{(j)}$ of $f_S^{(j)}$, for $j=1,\ldots, N_C$.

For $T=1$ in \eqref{eqn_gnn}, choosing the input vectors as the feature, $H^{(0)} = \bar{x}_t$, then one can set 
\begin{eqnarray}
F_G(\bar{x}_t,\bar{\cG}_t) \equiv G_{2K,t}(\bar{x}_t,\bar{\cG}_t)\label{FG}
\end{eqnarray}
as a GNN model for $\dot{\bar{x}}$ with Markovian dynamics. Following such an approach for $T>1$ in \eqref{eqn_gnn}, unfortunately, is not computationally attractive due to the complexity induced in representing all the coarse-grained graphs $\bar{\cG}_{t-s}$, at  $s= 1, \ldots, T-1$. For example, if one considers the following GNN model,
\begin{eqnarray}
F_G(\bar{x}_t,\ldots,\bar{x}_{t-T+1},\bar{\cG}_t,\ldots,\bar{\cG}_{t-T+1}) = \sum_{i=0}^{T-1} c_i G_{2K,t-i}(\bar{x}_{t-i},\bar{\cG}_{t-i}),\label{compositeFG}
\end{eqnarray}
the total number of parameters is $T$ times more than that in \eqref{FG}. 

To avoid such a computationally expensive modeling approach, we consider employing an autoencoder that takes only the topology information of $\bar{\cG}_t$ as part of the inputs. Computationally, we assign an integer index $J$ to each distinct coarse-grained topology $\bar{\cG}$, and replace the inputs of $\{\bar{\cG}_{t},\ldots, \bar{\cG}_{t-T+1}\}$, with their corresponding indices $\{J_{t},\cdots,J_{t-T+1}\}$.
Since the inputs are of mixed type, continuous ($\bx$) and discrete ($J$), an encoding step is incorporated to map the inputs to a continuous latent space, where graph convolution steps will occur. Correspondingly, a decoding step is needed to extract the predicted $\dot{\bx}$ out of the latent space.

Certainly, replacing the topologies with indices would sacrifice connectivity information in the past $T$ steps, but the numerical results in the next section will show that such replacement achieves satisfactory accuracy in the prediction.  The direct use of all topologies, as in \eqref{compositeFG}, may become necessary when the network dynamics is more nonlinear and/or when the number of topologies grows; this is left for future study.
}


\paragraph{Detailed GNN Architecture.}
Here, we present the details of the GNN-based reduced-order model.
In total, the GNN is implemented via an Encoder-Processor-Decoder architecture \cite{Yu2024,Scarselli2009}.
The details of the three components are explained in the following,
\begin{compactenum}
    \item Encoder: First, the encoder is applied to each individual node.  It maps the sequence of partial measurements at a node and the topology indices, to a feature vector $h_i^{(0)}\in\bR^{D_0}$. For node $i$ at time step $k$, the encoder $f_E$ is
    \begin{equation}\label{eqn:encoder}
        h_i^{(0)} = f_E(\bx_{i,t},\bx_{i,t-1},\cdots,\bx_{i,t-T+1},J_{t},J_{t-1},\cdots,J_{t-T+1};\Theta^{(0)}),
    \end{equation}
    where $f_E$ is implemented as a standard fully-connected NN (FCNN) of $N_M$ hidden layers with a set of trainable parameters $\Theta^{(0)}$. After the encoding, the feature vectors of all the nodes are denoted as $H^{(0)}\in\bR^{M\times D_0}$\CORR{, where $M$ is the number of coarse-grained nodes.}

    \item Processor: Subsequently, a stack of $N_C$ graph MP layers, $\cG_{2K,p}$ in \eqref{G2K}, serves as processors that successively aggregate the features from each node and its neighbors and update the feature vectors at each node. \CORR{Denote
    \begin{equation}\label{eqn:processor}
    H^{(N_C)} = G_{2K,t}(H^{(0)},\bar{\cG}_t;\Theta^{(1)},\ldots,\Theta^{(N_C)})
    \end{equation}
    as the output feature.}
    
    
    \item Decoder: Finally, the decoder maps the feature vector of each node to the desired output at the corresponding node, i.e., the rate of change,
    \begin{equation}\label{eqn:decoder}
        \dot{\bx}_i=f_D(h_i^{(N_C)};\Theta^{(N_C+1)}),
    \end{equation}
    where $f_D$ is a FCNN of $N_M$ hidden layers with trainable parameters $\Theta^{(N+1)}$.
\end{compactenum}

\CORR{Using \eqref{eqn:encoder}-\eqref{eqn:decoder}, the specific functional form of GNN in \eqref{eqn_gnn} is,
\begin{equation}
    F_G = f_D\circ G_{2K,t}\circ f_E,
\end{equation}
where the learnable parameters are $\vtQ=[\Theta^{(0)},\Theta^{(1)},\cdots,\Theta^{(N_C+1)}]$.}

\paragraph{Implementation and Training.}
The GNN models are implemented using PyTorch Geometric (PyG) \cite{PyG2019}, an open-source machine learning framework with Graph Network architectures built upon PyTorch \cite{PyTorch2019}.

The GNN model is trained by minimizing the prediction loss at all time steps.  Given a trajectory of $N_t$ steps and a model having a time delay embedding of $T$ steps, the prediction loss is defined as
\begin{equation}
    \CORR{\cJ(\vtQ) = \sum_{t=T}^{N_t-1} \norm{ \frac{\bx_{t+1}-\bx_{t}}{\Delta t} - F_G\left(\{\bx_i\}_{i=t-T+1}^{t},\cG_t,\{J_i\}_{i=t-T+1}^{t-1};\vtQ\right) }.}
\end{equation}
The loss is minimized using the standard Adam algorithm \cite{Diederik2017}, with an exponential decay scheduling of learning rate. 

\section{Numerical Results}\label{sec5}

In this section, we demonstrate the effectiveness and versatility of the proposed model for graph dynamics with fixed and time-varying topologies.  Two examples are considered.  One is an academic example based on the Kuramoto oscillators, and the other is a more practical application based on a power grid system.

\subsection{Kuramoto Dynamics}

Based on the formulation in Example \ref{eg:kuramoto_1}, a Kuramoto system having 20 nodes is constructed.  Each node is assigned a natural frequency $\omega_i\sim U([1,15])$, where $U([a,b])$ denotes a uniform distribution over interval $[a,b]$.

The topology of the Kuramoto system is generated as follows.
The nodes are evenly divided into five groups, and within each group, the nodes are fully connected.  Each group is randomly connected to at most two other groups, and every pair of connected groups has only one edge between two randomly selected nodes from each group.  All the edge weights $\kappa_{ij}\sim U([\kappa_l,\kappa_u])$, where the values of $\kappa_l,\kappa_u$ will be specified later.  An example topology is shown in Fig. \ref{kuramoto_topo}(a), where the diagonal elements are $\omega_i$, and the off-diagonal elements are $\kappa_{ij}\sim U([4,6])$.  The choice of graph topology and edge weights encourages strong and dense intra-group interactions and weak and sparse inter-group interactions.
When time-varying topologies are considered, the intra-group connections are kept the same, but each group is randomly connected to different groups.

Next, following Example \ref{eg:kuramoto_2}, the Kuramoto topology is coarse-grained by averaging the states of each group as well as the edge weights.
\CORR{Every 4 consecutive nodes are collected as one group, so in total there are $M=5$ groups, leading to a 5-node coarse-grained graph.  An illustration of the coarse-graining is shown in Fig. \ref{kuramoto_topo}(b), where the diagonal elements represent $\alpha_i^{11}$ and the off-diagonal non-zero elements represent $\beta_{ij}^{11}$.}  Furthermore, following Example \ref{eg:kuramoto_3}, the coarsened edge weights are used in the GNN model for the prediction of the averaged states given past observations.

\insertfig{kuramoto_topo}{0.95}{An example topology before and after coarse-graining.}


The hyperparameters of the nominal GNN model are outlined in Table \ref{tbl:arch}, following the architecture specified in Sec. \ref{sec:gnn}. Every hidden layer has 128 neurons.

\begin{table}[htbp]
\centering
\caption{Detailed hyperparameters of the nominal GNN model.}
\label{tab:model_parameters}
\begin{tabular}{@{}lll@{}}
\toprule
\textbf{Component} & \textbf{Layer Type} & \textbf{Parameters} \\ \midrule
\textbf{Encoder}   & FCNN       & $N_M=1$ \\
          & Activation & PReLU     \\ \midrule
\textbf{Processor} & GCL & $N_C=2$, $S=2$  \\
          & Activation & PReLU     \\ \midrule
\textbf{Decoder}   & FCNN       & $N_M=1$ \\
          & Activation & PReLU     \\ \bottomrule
\end{tabular}\label{tbl:arch}
\end{table}

\subsubsection{Fixed Topology}\label{sec:static_topo}

\paragraph{Model Benchmark.}
First, the numerical experiments on a fixed topology are presented.
The weights shown in Fig. \ref{kuramoto_topo}(b) are used, where $\epsilon\approx 0.375$, $d\approx 2$ and hence $d\epsilon\approx 0.75$.
A dataset comprising 100 trajectories is generated using the Kuramoto dynamics \eqref{kuramoto} with random initial conditions $\theta_i(t=0)\sim U([0,2\pi])$, $i=1,2,\cdots,20$.  Each trajectory is simulated for 10 seconds with a time step size of $\Delta t=0.01s$, and then the states are coarse-grained according to \eqref{eqn:obs}.  Eighty trajectories are used as the training dataset and the rest are used as test dataset.  For prediction, the first $T$ steps of true data are used as the initial condition for the GNN model.

To benchmark the GNN model, we employ two baseline models: a Multi-Layer Perceptron (MLP) model and a Long Short-Term Memory (LSTM) model \cite{Sutskever2014}, both of which do not account for the graph topology.  The MLP model was configured by substituting the GNN layers of the processor module \eqref{eqn:processor} with \CORR{an FCNN $F_P$ having the same number of ChebConv layers as in GNN,
\begin{equation}     H^{(N_C)}=F_P(H^{(0)};\Theta^{(1)},\ldots,\Theta^{(N_C)}),
\end{equation}}
which eliminates the constraints on the topology and allows any form of interaction between any pair of nodes.

The LSTM model is a Recurrent Neural Network (RNN), with its ability to capture long-term dependencies and sequences \cite{hochreiter1997long}, making it particularly suited for time-series and sequential data analysis. In this case, a standard LSTM implementation processes the sequence of $T$ partial measurements ($\bx_t,\bx_{t-1},\dots,\bx_{t-T+1}$) on all nodes through multiple LSTM layers, followed by a linear output layer that predicts the rate of change for each node $\dot\bx$.

The hyperparameters of both baseline models are chosen to match the size of the GNN model, ensuring a fair comparison by maintaining a similar parameter count and computational complexity across all models.


\insertfig{kuramoto_fixed}{0.95}{Model prediction on test data.}

The predictive accuracy of the models is quantified with normalized root mean square error (NRMSE) defined as, \begin{equation}
    \text{NRMSE}=\frac{1}{M}\sum_{i=1}^M\frac{\sqrt{\frac{1}{N_t}\sum_{k=1}^{N_t}(\tilde{x}_{k,i}-\bx_{k,i})^2}}{\max_k(\bx_{k,i})-\min_k(\bx_{k,i})},
\end{equation} 
where $\{\tilde{x}_k \in \bR^M\}_{k=1}^{N_t}$ denotes a predicted trajectory of length $N_t$ and $\{\bx_k \in \bR^M\}_{k=1}^{N_t}$ denotes the true trajectory.


Figure \ref{kuramoto_fixed} compares the model performance of the nominal GNN model against the baseline models for a representative test case. The prediction from the GNN models closely match the ground truth in both frequencies and magnitudes over the entire prediction horizon. The model exhibits minimal error accumulation on predictions over extended time period, achieving a NRMSE of 0.027. In contrast, the two baseline models demonstrate notable deviations in their predictions. Specifically, the MLP model accurately captures the oscillation magnitudes yet exhibits phase errors in long-term forecasts, resulting in a NRMSE of 0.200. The LSTM model presents significant discrepancies in both magnitude and frequency, leading to an even higher NRMSE of 0.386. These deviations, particularly pronounced in long-term predictions, are likely exacerbated by cumulative errors. A comparison of the predictive NRMSE for all three models across the entire test dataset is presented in Table \ref{tbl:fixtopo}. Remarkably, the GNN model achieved a NRMSE that was an order of magnitude lower than the other two models, together with the lowest standard deviation. This underscores the necessity of incorporating topological information into the coarse-grained modeling of graph dynamics.

\begin{table}[ht]
\centering
\caption{Comparison of model performance on NRMSE}
\label{tbl:fixtopo}
\begin{tabular}{@{}lcc@{}}
\toprule
\textbf{NRMSE} & \textbf{Mean} & \textbf{Std.} \\ \midrule
GNN   & \textbf{0.0285} & \textbf{0.0255}              \\
MLP   & 0.2102 & 0.1053              \\
LSTM  & 0.1664 & 0.0629              \\ \bottomrule
\end{tabular}
\end{table}

\paragraph{Parametric Study.}
Next, the effect of spatiotemporal delay embedding is examined via a parametric study over the number of the MP steps and the delay embedding length $T$. The models examined in this study still follow the architecture and parameters listed in Table \ref{tbl:arch}, but with variation in $N_C$ and $S$, which results in $N_C\times S$ hops. Specifically, the 1-hop model uses $N_C=1$ and $S=1$, the 2-hop model $N_C=1$ and $S=2$, and the 4-hop model $N_C=2$ and $S=2$.  Furthermore, for each hop, seven models of different time delay lengths $T=\{10,20,30,40,50,75,100\}$ are considered.
To obtain statistically significant results, we quantify the performance of a $k$-hop model with delay length $T$ using the mean and standard deviation of the test errors from five random cases.
In each case, a topology with $d\epsilon=0.75$ is randomly generated, and the model is trained and tested according to the methodology outlined in Sec. \ref{sec:static_topo}. 

The comparison is illustrated in Fig. \ref{kuramoto_fixed_ms_de0.75}.
First, it is clear that the predictive accuracy of the 1-hop, 2-hop, and 4-hop models enhances as the delay embedding length $T$ increases and the error is approximately inversely proportional to $T$; this trend is consistent with the results of Proposition \ref{prop_tde}.
Second, the predictive accuracy is significantly improved when the MP steps increase from 1 to 2, but the 2-hop and 4-hop models have almost identical performance.  This trend is again explained by Proposition \ref{prop_tde}.  In the Kuramoto example, the full-order dynamics has $K=1$ interaction with $d\epsilon=0.75>1/2$, and thus induces non-negligible $2K=2$ hops of interaction in the coarse-grained dynamics.  Therefore, as long as a model has at least 2 MP steps, it can accurately capture the dynamics with sufficient delay embedding length.

Furthermore, the effect of $d\epsilon$ is examined using the 2-hop model and time delay lengths $T=\{10,20,30,50,75,100\}$.
The variation of $d\epsilon$ is achieved by adjusting the range of off-diagonal weights.  Choosing $\kappa_l=2$ and $\kappa_u=3$ results in $d\epsilon=0.5$, and choosing $\kappa_l=1$ and $\kappa_u=1.5$ results in $d\epsilon=0.19$.  For each combination of time delay length and $d\epsilon$, the model performance is quantified again using the mean and standard deviation of five randomly generated cases.

The comparison is shown in Fig. \ref{kuramoto_fixed_ms_de_trend}.  While in all three cases of $d\epsilon$ the model accuracy enhances as $T$ increases, the 2-hop model clearly performs better when $d\epsilon$ increases.  This trend is explained by the results of Remark \ref{rem_de}.
When $d\epsilon=0.75$, $T\sim \frac{\CORR{R^2}(1-d\epsilon)^2}{L(d\epsilon)^2}=\frac{\CORR{8}}{9L}$; when $d\epsilon=0.5$ or $0.19$, $T\sim \frac{\CORR{R^2}}{L}=\CORR{\frac{8}{L}}$.  Hence the 2-hop model with $d\epsilon=0.75$ requires almost one order of magnitude shorter length to achieve a high predictive accuracy when compared to the other two cases. 

\insertfig{kuramoto_fixed_ms_de0.75}{0.95}{Mean and standard deviation of predictive NRMSE based on 5 randomly generated topologies with $d\epsilon=0.75$.}
\insertfig{kuramoto_fixed_ms_de_trend}{0.95}{Comparison of convergence trend with increasing $d\epsilon$ with 2-hop GNN model ($N_C=1$ and $K=2$).}

\subsubsection{Time-Varying Topology}

Next, the more challenging case of time-varying topologies is presented. A set of 5 random topologies are generated, denoted as $\mathbf{G}=\left\{\cG_1,\cG_2,\cG_3,\cG_4,\cG_5\right\}$, and the model is trained with same hyperparameters as the nominal model presented in Sec. \ref{sec:static_topo}. For the dataset used in this study, each trajectory is split into multiple segments by switching to a different topology randomly selected from $\mathbf{G}$, where the system states at the end of each segment become the initial condition for the next stint to ensure a continuous trajectory. Two scenarios of different segment lengths are explored to assess the GNN model's capabilities, and a separate model is trained and tested for each scenario.

Figure \ref{kuramoto_varying} illustrates the performance of the GNN model compared to two baseline models, MLP and LSTM, in the first scenario with less frequent topology changes. This scenario is designed to evaluate the models' ability to make longer predictions after each topology change, capturing the dynamics accurately as they gradually return to synchronized oscillation. The vertical dashed lines in the figure mark the topology transitions. 
\insertfig{kuramoto_varying}{0.95}{Comparison of model performance with sparse topology changes during simulations. }

Overall, all three models show a reduction in prediction accuracy, when compared to the fixed topology cases. However, the GNN model again outperforms the baseline models by a significant margin, achieving an NRSME of 0.096, which reflects its capability to accurately reproduce both the frequency and magnitude of system dynamics over prolonged intervals post each topology alteration. Despite some discrepancies in $z_2$ and $z_3$ after the first topology change, the GNN model demonstrated a remarkable ability to mitigate these errors over time, realigning its predictions with the correct dynamics. In comparison, the MLP model's predictions suffer from a cumulative phase error over time, lacking the ability to correct itself, which results in a significantly higher NRMSE of 0.425. The LSTM model initially provides reasonable approximations of the dynamics until the first topology change, despite noticeable deviations in phase angle and magnitude. Beyond this point, its predictions significantly diverge, leading to escalating errors and a resultant NRMSE of 1.002. This comparison shows that the GNN models the dynamics in the scenario of topology changes with much greater efficacy, primarily due to its integration of topology information directly into the modeling process, enabling it to outperform the others significantly.



\insertfig{kuramoto_varying_dense}{0.95}{GNN model performance with frequent topology changes during simulations.}

In the second scenario, the topology changes occur more frequently, which is designed to test the model stability, i.e., the model's ability to remain bounded without exhibiting divergence from the true dynamics over time, even when subjected to rapid and frequent perturbations. Given that the baseline methods (MLP and LSTM) were unsuccessful in the previous case, only the GNN model is evaluated under these conditions. Remarkably, the GNN model continues to demonstrate accurate prediction capabilities and the ability to swiftly correct any deviations from true dynamics, achieving an NRMSE of 0.108. 

For each scenario, 40 trajectories are used to test the model, and the predictive performance is summarized in Table \ref{tbl:varytopo}. The GNN model performs consistently well across the two scenarios with low predictive NRMSE and standard deviation, showcasing its robustness and reliability even in dynamic and challenging scenarios.

\begin{table}[ht]
\centering
\caption{Comparison of model performance on NRMSE across different scenarios}
\label{tbl:varytopo}
\begin{tabular}{@{}lcccc@{}}
\toprule
\textbf{Scenario} & \multicolumn{2}{c}{\textbf{Few topology changes}} & \multicolumn{2}{c}{\textbf{Frequent topology changes}} \\\hline
\textbf{NRMSE}
& \textbf{Mean} & \textbf{Std.} & \textbf{Mean} & \textbf{Std.} \\
\midrule
GNN   & \textbf{0.0991} & 0.0926 & \textbf{0.1093} & 0.1428 \\
MLP   & 0.3748 & \textbf{0.0661} & - & - \\
LSTM  & 1.4855 & 0.3009 & - & - \\
\bottomrule
\end{tabular}
\end{table}

\subsection{Power System}

In this section, we consider a power system with time-varying topology to evaluate the performance of the GNN model in a real-world scenario.
The power system contains 5 generator buses and 5 load buses, and its configuration with a circuit breaker is illustrated in Fig. \ref{power_sys_topo_2}.
The dynamics is modeled using a set of DAEs that describe the dynamics of the generators, loads, and the transmission network, as discussed in \CORR{Appendix \ref{sec_pow}}.

\insertfig{power_sys_topo_2}{0.7}{The topology of the power system with 10 buses.}

The power system topology varies over time as follows. Initially, the system operates at steady state with the circuit breaker between Buses 3 and 4 closed. At $T=0.15s$, a topology change is introduced by opening the circuit breaker; this event effectively splits the power system into two isolated islands, and initiates transient dynamics within each island.  Finally, at $T=1.9s$, the circuit breaker is closed, which recovers the initial topology and initiates again some transient dynamics.
The time-varying topology is simulated by modifying the admittance matrix of the power system at the specified time instances. The opening of the circuit breaker is modeled by setting the corresponding entries in the admittance matrix to zero, while the closing is modeled by restoring the original values.
Mathematically, the transient dynamics induced by the topology changes is due to the change in the admittance matrix of the DAE for the power system.

Following Example \ref{eg:power_2}, the power system is coarse-grained using the bus voltage amplitude $|V_i|$ and phase angle $\alpha_i$ of each node.
Specifically, the observables of node $i$ are defined as $\bx_i=[|V_i|, \sin(\phi_i), \cos(\phi_i)]$.
The coarse-graining still results in a 10-node graph.  The edge weights are computed by \eqref{eqn_pow_wij} following Example \ref{eg:power_3}, where the user-specified parameter \CORR{$\eta$} is chosen so that the weights are in the range of $[0.2,1]$ \cite{Yu2024}.

Next, for the GNN modeling of the coarse-grained dynamics, we need to examine if the interaction strength between the nodes follows the power law assumption employed in Proposition \ref{prop_tde}.
To do so, we first compute the strength ratios $\gamma(k_{ij})=\beta_{ij}^{11}/\alpha_i^{11}$ between nodes $i$ and $j$, where $k_{ij}$ is the number of hops between the two nodes.  Then, we plot the ratios from all pairs of nodes against the number of hops, which are shown as a boxplot in Fig. \ref{power_sys_ratio}; note that the strength ratio at 0-hop is fixed to be unity.
It is clear that the interaction strength becomes negligible beyond 2 hops, so the power system has approximately $K=2$ hops of interactions.
To verify the power law, we fit the data with a regression model $\gamma(k)=\epsilon^k$, as shown in Fig. \ref{power_sys_ratio}.  It was found $\epsilon\approx 0.313$ and the power law assumption is confirmed with a coefficient of determination as high as $0.836$.
Lastly, the average degree of the graph is $d=1.8$, and hence the factor $d\epsilon\approx 0.5634>1/2$.  This shows that $2K=4$ hops are needed in the GNN model. 

\insertfig{power_sys_ratio}{0.8}{The ratios of interaction strengths in the fixed topology case.}

\subsubsection{Model Training and Prediction}
Based on the preceding discussion, a 4-hop GNN model is employed with the hyperparameters listed in Table \ref{tbl:arch} and the architecture detailed in Sec. \ref{sec:gnn}; 128 neurons are used in each hidden layer.

For data generation, the system is simulated for $2.7s$ with a time step of $\Delta t=0.001s$.  A total of 160 trajectories are generated for training, and 40 additional trajectories for testing.

The trained GNN model demonstrates accurate and consistent performance across all 40 test cases, with NRMSE$=0.0038\pm0.0007$, indicating the model's predictions closely match the actual measurements with minimal deviation. An example of model prediction is shown in Fig. \ref{power_system_results}, illustrating the model's exceptional accuracy in predicting both voltages and phase angles across all buses. 

The GNN model's performance is particularly impressive in capturing the initial transient dynamics triggered by topology changes at $T=0.15s$ and $T=1.9s$. During these events, the model accurately predicts the sudden changes in bus voltages and phase angles. Furthermore, the model successfully identifies the new steady-state equilibrium points reached by the system after each transient event.

\insertfig{power_system_results}{0.95}{Model prediction of bus voltage with varying topologies.}

\section{Conclusions}

In this paper, we introduced a systematic framework utilizing Graph Neural Networks (GNNs) for the non-Markovian reduced-order modeling (ROM) of coarse-grained dynamical systems represented on graphs.
These systems are characterized by heterogeneous nodal dynamics, up to $K$-hop interactions among the nodes, and potentially time-varying topologies.
For the reduced-order modeling, we consider the coarse-graining for both the nodal states and groups of nodes.
Our approach hinges on a systematic design of the GNN architecture, guided by the Mori-Zwanzig (MZ) formalism.  In particular, we inspect how the leading term of the MZ memory term depends on the coarse-grained interaction coefficients that encode the graph topology, and subsequently determine the appropriate functional form of the GNN-based ROM.

A pivotal element of our methodology is the adaptation of the GNN architecture based on the decay of interaction strength with the hop distance, governed by a power-law decay factor, $\epsilon$.
Specifically, for weaker interactions, where $d\epsilon < 1/2$ with $d$ representing the average degree, the GNN architecture requires only $K$ Message Passing (MP) steps to effectively capture $K$-hop dynamical interactions. In contrast, stronger interactions (i.e., $d\epsilon > 1/2$) necessitate at least $2K$ MP steps, underscoring the importance of tailoring the depth of the network to the dynamics of the system under study.  Moreover, we found that the memory length essential for an accurate ROM inversely correlates with the interaction strength factor $d\epsilon$, which is a significant insight for designing more compact and precise predictive models.

Guided by the theoretical finding, we constructed a GNN architecture involving an autoencoder and graph convolutions.
The convolution is based on a classical spectral-based formulation, ChebConv.  A ChebConv NN has \CORR{$N_C\times S=2K$} MP steps, where $N_C$ is the number of layers and $S$ is the Chebyshev polynomial order.
The MZ-guided GNN structure was numerically verified through two examples.
One example is a heterogeneous Kuramoto oscillator model, that has 1-hop interactions and both the nodal states and nodes are coarse-grained.  Both fixed and time-varying topologies are considered.
The GNN model shows notable improvements in accuracy over conventional MZ-based ROMs; such improvements are attributed to the coordinated incorporation of the graph topology and MP steps into the GNN architecture.
To demonstrate how the proposed methodology can be applied to real world applications, we also considered a power system model, that has multi-hop interactions and time-varying topologies.
In the ROM, only the nodal states are coarse-grained.
We started with a practical and generalizable procedure for determining the interaction strength factor $\epsilon$ from the system dynamics.
Subsequently, we chose the hyperparameters in the GNN architecture, especially the MP steps, based on the identified $\epsilon$.  Lastly, the GNN-based model constructed in this manner is demonstrated to accurately capture the intricate power system dynamics due to topology changes of the power grid.

Collectively, the findings from this study not only validate the effectiveness of our proposed GNN-based framework in modeling complex dynamical systems but also suggest its broad applicability across a variety of scientific and engineering domains. These include, but are not limited to, modeling particle systems in physics, predicting chemical reactions, and simulating fluid dynamics. The ability of GNNs to integrate and adapt to the nuances of networked systems paves the way for significant advancements in the understanding and prediction of complex systems dynamics, offering promising avenues for future research and application.

\section*{Acknowledgments}

This work was partially supported by the NSF DMS-2229435. The research of J.H. was partially supported by the NSF grants DMS-2207328 and the ONR grant N00014-22-1-2193.  

\appendix
\section{Formulation of Power System Dynamics}\label{sec_pow}

\subsection{System-level dynamics}

The transient dynamics of a power system is governed mainly by DERs and power loads. The modeling of the system of $N$ nodes is typically represented by a set of nonlinear differential-algebraic equations (DAEs) when the power-electronic interfaces of DERs are modeled by an averaged dynamics equation,
\begin{subequations}
\begin{align}  
\dot{\mathbf{x}}&=\boldsymbol{f}\big(\mathbf{x},\mathbf{y}\big), \label{eq_DAE_d}\\
\mathbf{0}&=\boldsymbol{g}\big(\mathbf{x},\mathbf{y}\big) \label{eq_DAE_a}, 
\end{align} 
\end{subequations}
where the differential equations $\boldsymbol{f}$ summarize the model of transient dynamics from DERs; 
the algebraic equations $\boldsymbol{g}$ represent the power flow equations essential for compliance from the viewpoint of the interconnected system;
$\mathbf{x} = [\mathbf{x}_{1},\mathbf{x}_{2},\cdots,\mathbf{x}_{N}]$ is the state variables of DER units, e.g., $\mathbf{x}_{i}$ is the state variables of the $i^{\text{th}}$ DER unit;  
$\mathbf{y} = [\mathbf{y}_1,\mathbf{y}_2,\cdots,\mathbf{y}_N]$ is the algebraic variables, e.g.,  bus voltage amplitude and angle of the interconnected system. 
In the following, \eqref{eqn_node_x_0} gives an example of a part of \eqref{eq_DAE_d} that is associated with the $i^{\text{th}}$ DER unit,
\begin{equation}\label{eqn_node_x_0}
    \dot{\vx}_i = \vA_i\vx_i + \vf_i(\vx_i,\vy_{i},\vI_{gi}),
\end{equation}
where $\vy_{i}$ is the voltage of the node connecting to the $i^{\text{th}}$ DER unit, and $\vI_{gi}$ is its corresponding current output, as explained in \eqref{eq_current}.  An instance of \eqref{eqn_node_x_0} is provided in \ref{sec_app_node}.

Since \eqref{eq_DAE_a} represents the power flow equations, from the viewpoint of current,
\begin{equation} \label{eq_current}
    \vY \vy = \vI_{g} - \vI_{l},
\end{equation}
where $\vY$ is the admittance matrix of the system, $\vI_{g}=[\cdots, \vI_{gi}, \cdots]$, and $\vI_{l}=[\cdots, \vI_{li}, \cdots]$. Here, $\vI_{gi}$ is non-zero if node $i$ has a DER unit. Since $\vI_{gi}$ is the current output of the $i^{\text{th}}$ DER unit, it is a function of $\vx_{i}$ and $\vy_{i}$,
defined as, $\vI_{gi}=\vh_i(\vx_i)- Y_i\vy_i$, where $\vh_i(\vx_i)$ represents the current related to the output voltage of the $i^{\text{th}}$ DER unit and
$Y_i $ represents the admittance of the connection circuit of the $i^{\text{th}}$ DER unit and is a constant value. In \eqref{eq_current}, $\vI_{li}$ is non-zero if node $i$ has a load, which could be a constant, a linear function of $\vy_i$, or inversely proportional to $|\vy_i|^2$.  In the current study, we only consider the first two cases of $\vI_{li}$.

Based on above discussion, $\vI_{gi}$ is a function of $\vx_i$ and $\vy_i$, and hence \eqref{eqn_node_x_0} can be written more compactly as,
\begin{equation}
    \dot{\vx}_i = \vA_i\vx_i + \hat{\vf}_i(\vx_i,\vy_i) \equiv \tilde{\vF}_i(\vx_i,\vy_i).
\end{equation}

Collectively, denote the nodes that have DERs as $\cR$ and the nodes that have loads as $\cS$.  Note that it is possible that $\cR\cap\cS\neq\varnothing$.  Then, we can write the currents of all nodes as
\begin{equation} \label{eq_current2}
    \vI = \vI_g - \vI_l \equiv \vh(\vx) - \hat{\vY} \vy- (\vM\vy+\vm),
\end{equation}
where
$$
\vh(\vx) = [\vh_1(\vx_1)^{\mathsf{T}},\ \vh_2(\vx_2)^{\mathsf{T}},\ \cdots, \vh_N(\vx_N)^{\mathsf{T}}]^{\mathsf{T}},
$$
and $\vh_j=0$ if $j\notin\cR$; 
$\hat{\vY} $ is a diagonal matrix and  the $j^{\text{th}}$ diagonal block is 0 if $j\notin\cR$;
$\vM$ is a diagonal matrix and the $i^{\text{th}}$ diagonal block of $\vM$ and the $i^{\text{th}}$ block of $\vm$ are 0 if $i\notin\cS$.
When we only consider the first two cases of $\vI_{li}$ in the current study, based on \eqref{eq_current} and \eqref{eq_current2},
\begin{equation} \label{eq_current3}
    \tilde{\vY} \vy = \vh(\vx) - \vm,
\end{equation}
where $\tilde{\vY}=\vY+\hat{\vY}+\vM$ is the extended admittance matrix.

From \eqref{eq_current3}, take time derivative on both sides,
\begin{equation}
    \tilde{\vY}  \dot{\vy} = \nabla_\vx\vh(\vx)\dot{\vx}, 
\end{equation}
or element-wise
\begin{equation}\label{eqn_node_v}
    \dot{\vy}_i = \sum_{j=1}^N \tilde{\vY}^{-1}_{ij} \dot{\vI}_j = \sum_{j=1}^N \tilde{\vY}^{-1}_{ij} \nabla_\vx \vh_j \dot{\vx}_j,
\end{equation}
where $\tilde{\vY}^{-1}_{ij}$ is the $(i,j)$th block of $\tilde{\vY}^{-1}$.

Then for the nodes that connect to DERs, the states are $[\vx,\vy]$, and the dynamics is,
\begin{equation}\label{eqn_extended_states}
    \begin{bmatrix}
        \dot{\vx}_i \\ \dot{\vy}_i
    \end{bmatrix} =
    \begin{bmatrix}
        \vI & O \\ O & \tilde{\vY}^{-1}_{ii}
    \end{bmatrix}
    \begin{bmatrix}
        \tilde{\vF}_i(\vx_i,\vy_i) \\ \nabla_\vx \vh_i \tilde{\vF}_i(\vx_i,\vy_i)
    \end{bmatrix}
    +
    \sum_{i\neq j}
    \begin{bmatrix}
        O & O \\ O & \tilde{\vY}^{-1}_{ij}
    \end{bmatrix}
    \begin{bmatrix}
        O \\ \nabla_\vx \vh_j \tilde{\vF}_j(\vx_j,\vy_j)
    \end{bmatrix},
\end{equation}
where $\vI$ is identity matrix.

For a non-DER node, the dynamics is,
\begin{equation}
    \dot{\vy}_i = \sum_{j\in\cR} \tilde{\vY}^{-1}_{ij} \nabla_\vx \vh_j \tilde{\vF}_j(\vx_j,\vy_j).
\end{equation}
\CORR{Note that the node states do not directly interact with each other.  Instead the interaction is purely from the inverse of $\tilde{\vY}$.

Finally, if we normalize $\tilde{\vF}_i$ with a factor $K_{f,i}$ and $\nabla_\vx\vh_i$ with a factor $K_{h,i}$, then the terms in \eqref{eqn_pow_der} and \eqref{eqn_pow_nde} are obtained,
$$
\vK = K_{f,i}\vI,\quad \bar{\vY}^{-1}_{ii}= K_{f,i}K_{h,i}\tilde{\vY}^{-1}_{ii},\quad \vF_i=\tilde{\vF}_i/K_{f,i},\quad \vG_i=\nabla_\vx \vh_i \tilde{\vF}_i/(K_{f,j}K_{h,j}).
$$}

\subsection{Node-level dynamics}\label{sec_app_node}

The DER connection circuit is shown in Fig.~\ref{fig_connection_diagram}, where the controllers for grid-forming (Battery and Micro-Turbine in Fig.~\ref{power_sys_topo_2}) and grid-following (PV and Fuel Cell in Fig.~\ref{power_sys_topo_2}) units are shown in Fig.~\ref{fig_Vf_diagram} and Fig.~\ref{fig_PQ_diagram}, respectively.
\begin{figure}
    \centering
    \includegraphics[width=0.8\textwidth]{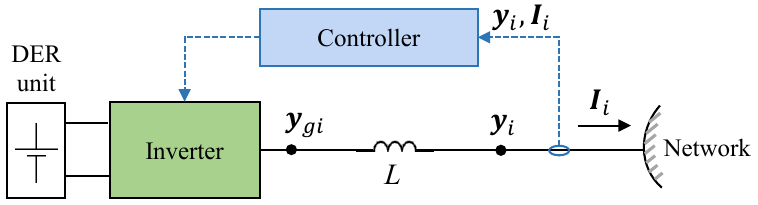}
    \caption{The connection circuit of DER}
    \label{fig_connection_diagram}
\end{figure}
\begin{figure}
    \centering
    \includegraphics[width=0.9\textwidth]{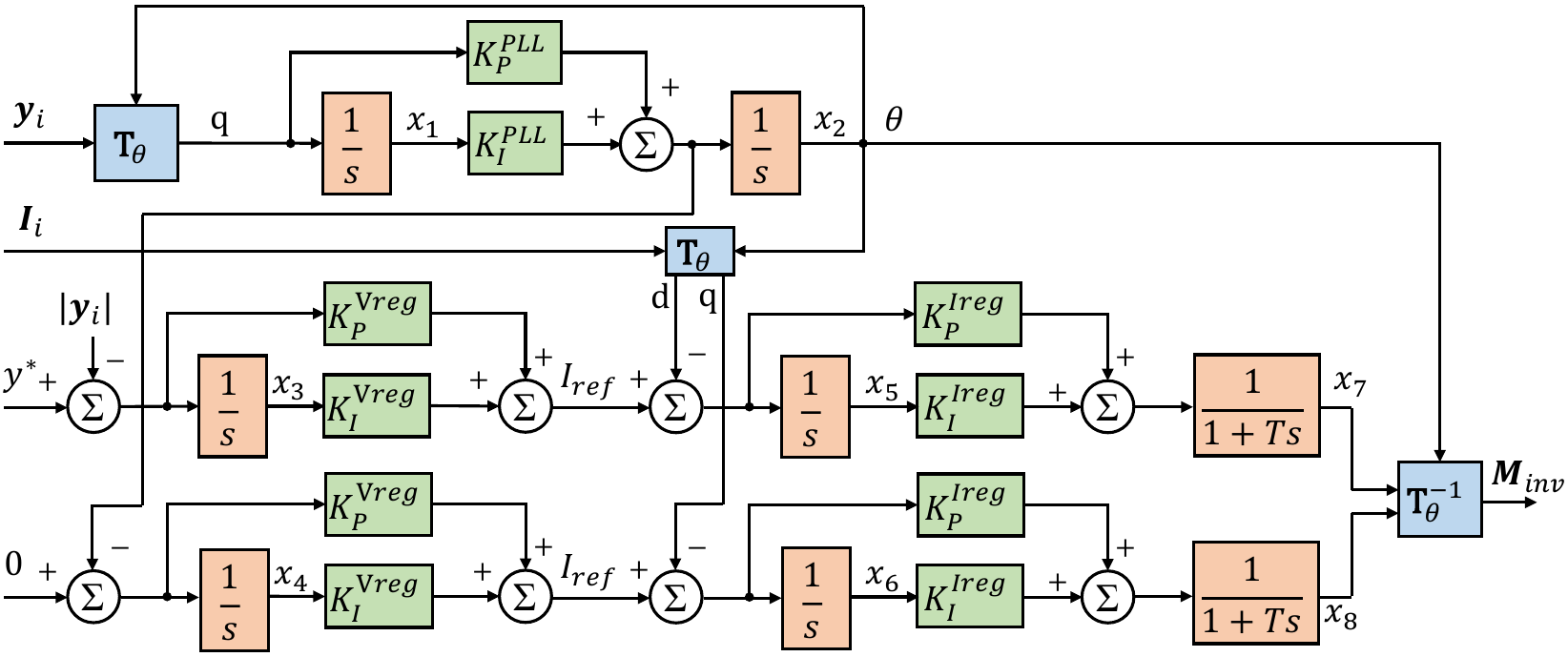}
    \caption{Grid-forming controller}
    \label{fig_Vf_diagram}
\end{figure}

For instance, the dynamics of the grid-forming controllers in Fig.~\ref{fig_Vf_diagram} can be described in the following equations, where the bus voltage is defined as $\mathbf{y}_i=y_i\angle \alpha_i=y_ie^{j\alpha_i}$, and the current is $\mathbf{I}_i=I_i\angle \beta_i=I_ie^{j\beta_i}$. Correspondingly, the modeling of grid-following controllers can also be developed,
\begin{subequations}
\begin{align}
    \dot{x}_1 &= -\abs{y_i}\cos\angle \alpha_i \sin{x_2} + \abs{y_i}\sin\angle \alpha_i \cos{x_2} \label{E:Vf 1}\\
    \dot{x}_2 &= K_P^{PLL} (-\abs{y_i}\cos\angle \alpha_i \sin{x_2} + \abs{y_i}\sin\angle \alpha_i \cos{x_2}) + K_I^{PLL} x_1 \\
    \dot{x}_3 &= y^* - \abs{y_i} \\
    \dot{x}_4 &= -K_P^{PLL} (-\abs{y_i}\cos\angle \alpha_i \sin{x_2} +\abs{y_i}\sin\angle \alpha_i \cos{x_2}) - K_I^{PLL} x_1 \\
    \dot{x}_5 &= K_P^{Vreg} (y^* -  \abs{y_i}) + K_I^{Vreg} x_3 - (\abs{I_i} \cos \beta_i \cos{x_2} + \abs{I_i} \sin \beta_i \sin{x_2}) \\
    \dot{x}_6 &= K_P^{Vreg} \big[-K_P^{PLL} (-\abs{y_i}\cos\angle \alpha_i \sin{x_2} + \abs{y_i}\sin\angle \alpha_i  \cos{x_2}) \notag \\
    & \;\, - K_I^{PLL}x_1\big] + K_I^{Vreg} x_4 - (-\abs{I_i} \cos \beta_i \sin{x_2} + \abs{I_i} \sin \beta_i \cos{x_2}) \\
    \dot{x}_7 &= \frac{1}{T}\big[K_I^{Ireg}x_5+K_P^{Ireg}(K_P^{Vreg} (y^* -  \abs{y_i}) + K_I^{Vreg} x_3 \notag \\
    & \;\, - (\abs{I_i} \cos \beta_i \cos{x_2} + \abs{I_i} \sin \beta_i \sin{x_2})) -x_7\big]\\
    \dot{x}_8 &= \frac{1}{T}\big[K_I^{Ireg}x_6+K_P^{Ireg}(K_P^{Vreg} \big[-K_P^{PLL} (-\abs{y_i}\cos\angle \alpha_i \sin{x_2} + \abs{y_i}\sin\angle \alpha_i  \cos{x_2}) \notag \\
    & \;\, - K_I^{PLL}x_1\big] + K_I^{Vreg} x_4 - (-\abs{I_i} \cos \beta_i \sin{x_2} + \abs{I_i} \sin \beta_i \cos{x_2}) \big].
\end{align}    
\end{subequations}

Then, we can rewrite the above equations in the following matrix format,
\begin{equation}
\begin{aligned}
    &\begin{bmatrix}
        1 & 0 & 0 & 0 & 0 & 0 & 0 & 0\\
        -K_P^{PLL} & 1 & 0 & 0 & 0 & 0& 0& 0 \\
        0 & 0 & 1 & 0 & 0 & 0 & 0& 0\\
        K_P^{PLL} & 0 & 0 & 1 & 0 & 0& 0& 0 \\
        0 & 0 & -K_P^{Vreg} & 0 & 1 & 0 & 0& 0\\
        0 & 0 & 0 & -K_P^{Vreg} & 0 & 1& 0& 0\\
        0 & 0 & 0 & 0 & -K_P^{Ireg} & 0& 1& 0\\
        0 & 0 & 0 & 0 & 0 & -K_P^{Ireg}& 0& 1
    \end{bmatrix}
    \dot{\mathbf{x}}_i \\
    &=
    \begin{bmatrix}
        0 & 0 & 0 & 0 & 0 & 0 & 0& 0\\
        K_I^{PLL} & 0 & 0 & 0 & 0 & 0 & 0& 0\\
        0 & 0 & 0 & 0 & 0 & 0 & 0& 0\\
        - K_I^{PLL} & 0 & 0 & 0 & 0 & 0 & 0& 0\\
        0 & 0 & K_I^{Vreg} & 0 & 0 & 0 & 0& 0\\
        0 & 0 & 0 & K_I^{Vreg} & 0 & 0 & 0& 0\\
        0 & 0 & 0 & 0 & \frac{K_I^{Ireg}}{T} & 0 & -\frac{1}{T}& 0\\
        0 & 0 & 0 & 0 & 0 & \frac{K_I^{Ireg}}{T} & 0& -\frac{1}{T}
    \end{bmatrix}\mathbf{x}_i\\
    &+
    \begin{bmatrix}
        - \abs{y_i}\sin (\alpha_i - x_2) \\
        0 \\
        y^* - \abs{y_i} \\
        0 \\
        - \abs{I_i} \cos (\beta_i-x_2) \\
        - \abs{I_i} \cos (\beta_i-x_2)\\
        0 \\
        0 
    \end{bmatrix}
    \end{aligned},
\end{equation}
where $\mathbf{x}_i=[x_1,x_2,\cdots,x_8]^{\mathsf T}$.

\begin{figure}
    \centering
    \includegraphics[width=0.9\textwidth]{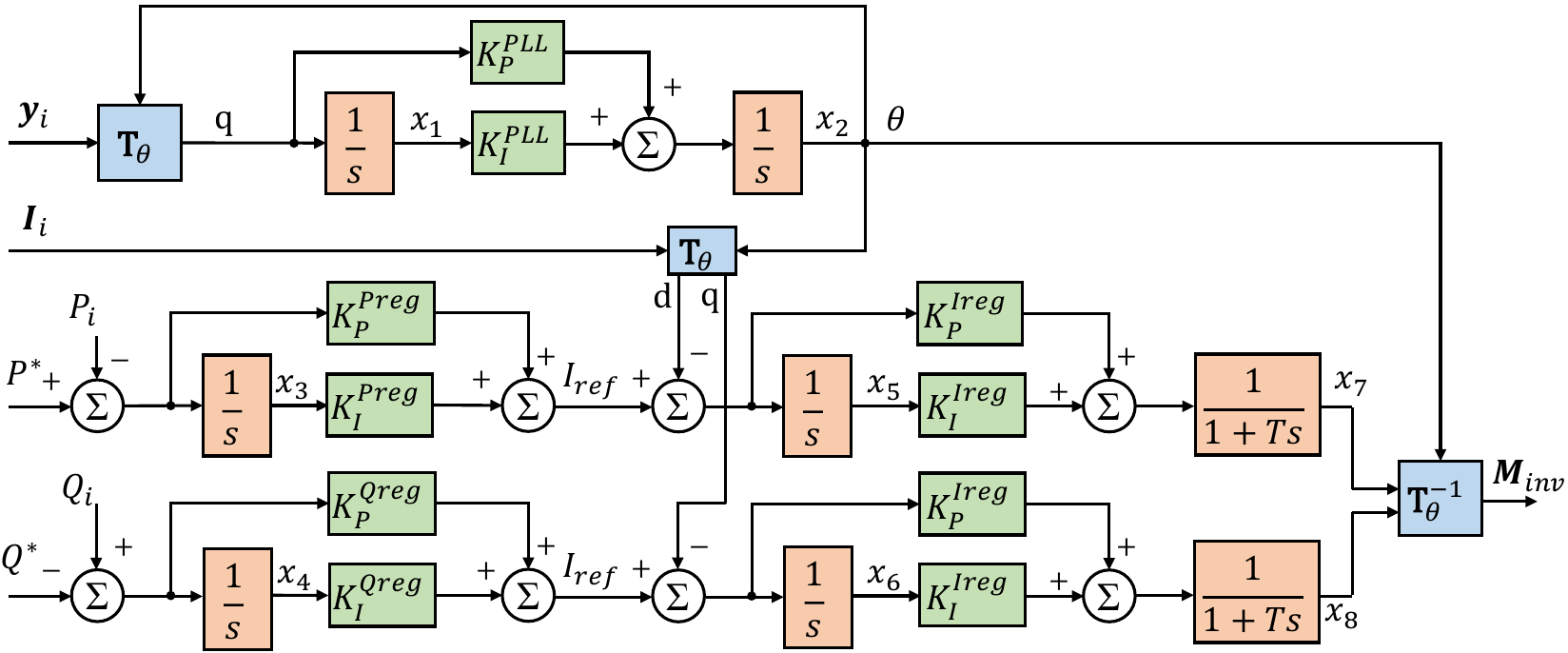}
    \caption{Grid-following controller}
    \label{fig_PQ_diagram}
\end{figure}
\section{Technical details in coarse-grained projection}

\CORR{
\subsection{Construction of left inverses}\label{sec_lin_inv}

In Sec. \ref{sec_cg}, we have defined $\Phi\in\bR^{N_x\times n}$ as a $N\times M$ block matrix and $[\Phi]_{ij}=\Phi_{ij}$ if $i\in\cV_j$ otherwise $[\Phi]_{ij}=0$; similarly for $\Psi\in\bR^{N_x\times (N_x-n)}$.  Denote $|\Phi|_2=R$.  If for a particular choice, $|\tilde{\Psi}|_2=R'$, one can choose $\Psi = \frac{R}{R'}\tilde{\Psi}$ to ensure $|\Phi|_2=|\Psi|_2=R$, as required by the main text.

Next we first construct the left inverse $\Phi^+$ for $\Phi$, such that $\Phi^+\Phi=I$ and $\Phi^+\Psi=O$.

Suppose $\cV_i=\{j_1,j_2,\cdots,j_k\}$, $k=|\cV_i|$, and define
$$
\Phi_i = \begin{bmatrix}
    \Phi_{j_1i} \\ \Phi_{j_2i} \\ \vdots \\ \Phi_{j_ki}
\end{bmatrix}.
$$
Since we require that $\Phi_i$ is a full column rank matrix, one can compute
\begin{align*}
    \Phi_i^+ &= (\Phi_i^\top\Phi_i)^{-1}\Phi_i^\top \\
    &= (\Phi_i^\top\Phi_i)^{-1} [\Phi_{j_1i}^\top, \Phi_{j_2i}^\top, \cdots, \Phi_{j_ki}^\top] \\
    &\equiv [\Phi_{ij_1}^+,\Phi_{ij_2}^+,\cdots,\Phi_{ij_k}^+],
\end{align*}
and it is clear that $\Phi_i^+\Phi_i=I$.

Next, define the left inverse $\Phi^+$ as a $M\times N$ block matrix, such that
$$
[\Phi^+]_{ij} = \left\{\begin{array}{ll}
    \Phi_{ij}^+,&\quad j\in\cV_i \\
    0 &\quad \text{otherwise.}
\end{array}\right.
$$
One can verify that $\Phi^+\Phi=I$, since its $(i,j)$th block is
$$
[\Phi^+\Phi]_{ij} = \sum_{k=1}^N \Phi_{ik}^+ \Phi_{kj} = \sum_{k\in\cV_i} \Phi_{ik}^+ \Phi_{kj} = \left\{\begin{array}{ll}
    I, & i=j \\
    O, & i\neq j
\end{array}\right.
$$
and that $\Phi^+\Psi = O$, since its $(i,j)$th block is
$$
[\Phi^+\Psi]_{ij} = \sum_{k=1}^N \Phi_{ik}^+ \Psi_{kj} = \sum_{k\in\cV_i} \Phi_{ik}^+ \Psi_{kj} = (\Phi_i^\top\Phi_i)^{-1}\sum_{k\in\cV_i} \Phi_{ki}^\top \Psi_{kj} = O.
$$
Hence $\Phi^+$ is indeed the desired left inverse for $\Phi$.

Via a similar procedure, one can construct the left inverse $\Psi^+$ that shares the same sparsity pattern as $\Phi^+$.
}

\CORR{
\subsection{Projection of $A$ matrices}\label{sec_lin_dia}

During the coarse-graining, e.g., in \eqref{proj_g2}, we encounter terms such as $A^{11}=\Phi^+ A\Phi$, where $A$ is a block-diagonal matrix.  We show that $A^{11}$ is also block-diagonal given the choice of $\Phi^+$ above.

Consider the $(i,j)$th block of $A^{11}$,
$$
[A^{11}]_{ij} = \sum_{k,l=1}^N\Phi_{ik}^+ A_{kl} \Phi_{lj} = \sum_{k=1}^N\Phi_{ik}^+ \alpha_{k} \Phi_{kj} = \sum_{k\in\cV_i}\Phi_{ik}^+ \alpha_{k} \Phi_{kj},
$$
where $A_{kl}$ denotes the $(k,l)$th block of $A$, which is zero if $k\neq l$.
When $k\in\cV_i$ and since $\cV_i \cap \cV_j = \emptyset$, for $i\neq j$, $\Phi_{kj}=0$ and $[A^{11}]_{ij}=0$, hence $A^{11}$ is block-diagonal.  When $i=j$, we have denoted,
$$
\alpha_i^{11} = \sum_{k\in\cV_i}\Phi_{ik}^+ \alpha_{k} \Phi_{ki}.
$$

By a similar procedure, in \eqref{proj_g3} we can show that $A^{12}$ is block-diagonal and have defined
$$
\alpha_i^{12} = \sum_{k\in\cV_i}\Phi_{ik}^+ \alpha_{k} \Psi_{ki}.
$$
}

\CORR{
\subsection{Projection of interaction terms}\label{sec_lin_oti}

First, we elaborate how \eqref{eqn_ph1} is obtained using \eqref{eqn_prj_p}.  To simplify the notation, we drop $t$ in $B(t)$.

Note that $H_1(x,x)=\Phi^+(B\otimes F(x,x))$ is a $M\times 1$ block vector, and the $i$th block is
\begin{equation}\label{eqn_bif}
[H_1(x,x)]_i = \sum_{k\in \cV_i}\Phi_{ik}^+\sum_{j\in \cN_{k,t}^{(K)}} \beta_{kj} f_{kj}(x_k,x_j) \equiv \sum_{k\in\cV_i}\underbrace{\Phi_{ik}^+}_{m_i \times n_k} \underbrace{B_k}_{n_k \times N_x} \underbrace{F_k(x,x)}_{N_x},
\end{equation}
where $B_k=[\beta_{k1},\beta_{k2},\cdots,\beta_{kN}]\in\bR^{n_k\times N_x}$ and $F_k^\top=[f_{k1}^\top,f_{k2}^\top,\cdots,f_{kN}^\top]\in\bR^{N_x}$, and note that by definition $\beta_{kj}\neq 0$ only when $j \in \cN_{k,t}^{(K)}$.

Each term in \eqref{eqn_bif} satisfies the form assumed in \eqref{eqn_prj_p}.  After projection the $i$th term is,
\begin{align*}
[\cP H_1(x,x)]_i &= \sum_{k\in \cV_i}\Phi_{ik}^+ B_k \Phi \Phi^+ \bar{F}_k(\bar{x}_i,\bar{x}) \\
&= \sum_{k\in \cV_i}\sum_{j\in \bar{\cN}_{i,t}^{(K)} }  [\Phi_{ik}^+ B_k \Phi]_j [\Phi^+ \bar{F}_k(\bar{x}_i,\bar{x})]_j 
\\
&\equiv \sum_{k\in \cV_i}\sum_{j\in \bar{\cN}_{i,t}^{(K)} }\beta_{ijk}^{11}  \overline{f^\Phi_{kj}}(\bar{x}_i,\bar{x}_j),
\end{align*}
where note that the summation is over $\bar{\cN}_{i,t}^{(K)}$ due to coarse-graining, and
\begin{eqnarray}
\beta_{ijk}^{11} &\equiv& [\Phi_{ik}^+ B_k \Phi]_j  = \sum_{\ell\in \cV_j}\Phi_{ik}^+ \beta_{k\ell} \Phi_{\ell j},\notag \\
\overline{f^\Phi_{kj}}(\bar{x}_i,\bar{x}_j) &\equiv & [\Phi^+ \bar{F}_{k}(\bar{x}_i,\bar{x})]_j =  \sum_{\ell\in \cV_j} \Phi_{j\ell}^+ \bar{f}_{kj}(\bar{x}_i,\bar{x}_j),\notag
\end{eqnarray}
for any $k\in \cV_i$.

Subsequently, we elaborate how \eqref{proj_g34} is obtained using \eqref{eqn_prj_q}.  We have
\begin{align}
[\cQ H_1(x,x)]_i &= \sum_{k\in \cV_i}\Phi_{ik}^+ B_k \Phi \Phi^+ (F_k(x_k,x) - \bar{F}_k(\bar{x}_i,\bar{x})) + \sum_{k\in \cV_i}\Phi_{ik}^+ B_k \Psi \Psi^+ F_k(x_k,x)\notag \\
&= \sum_{k\in \cV_i}\sum_{j\in \bar{\cN}_{i,t}^{(K)} } \Big( [\Phi_{ik}^+ B_k \Phi]_{j} [\Phi^+ (F_k(x_k,x) - \bar{F}_k(\bar{x}_i,\bar{x}))]_{j}
 \notag\\  & \hspace{1in} + [\Phi_{ik}^+ B_k \Psi]_j [\Psi^+ (F_k(x_k,x)]_j \Big) \label{mixdecomp} \\
&\equiv \sum_{k\in \cV_i}\sum_{j\in \bar{\cN}_{i,t}^{(K)} } \left(\beta_{ijk}^{11}  \widetilde{f^\Phi_{kj}}(x_k,\bar{x}_i,x_{\cV_j}) + \beta_{ijk}^{12} f^\Psi_{kj}(x_k,x_{\cV_j}) \right),\notag
\end{align}
where we have defined,
\begin{eqnarray}
\beta^{12}_{ijk} &=& [\Phi_{ik}^+ B_k \Psi]_{j}  = \sum_{\ell\in \cV_j}\Phi_{ik}^+ \beta_{k\ell} \Psi_{\ell j},\notag \\  
\widetilde{f_{kj}^\Phi} 
(x_k,\bar{x}_i,x_{\cV_j}) &=&[\Phi^+ (F_k(x_k,x_\ell)-\bar{F}_{k}(\bar{x}_i,\bar{x}))]_{j} =  \sum_{\ell\in \cV_j}\Phi_{j\ell}^+ (f_{k\ell}(x_k,x_\ell)) - \bar{f}_{k\ell}(\bar{x}_i,\bar{x}_j)),\notag  \\
f^\Psi_{kj}(x_k,x_{\cV_j}) &=& [\Psi^+ (F_k(x_k,x)]_j = \sum_{\ell\in \cV_j} \Psi^+_{j\ell}f_{k\ell}(x_k,x_\ell),\notag
\end{eqnarray}
for any $k\in \cV_i, \ell \in \cV_j$. In the above, we have defined $x_{\cV_j} = \{x_\ell: \ell \in \cV_j\}$. We should point out that the summation over the set of coarse-grained $K-$hop neighbors of node-$i$, $\mathcal{N}_{i,t}^{(K)}$, is valid even for those terms involving $\Psi$ in \eqref{mixdecomp} since $\Psi$ has the same block structure as $\Phi$, as defined in Section~\ref{sec:CGG}. 

}

\bibliographystyle{plain}
\bibliography{ref}

\begin{thebibliography}{10}

\bibitem{battaglia2016interaction}
Peter Battaglia, Razvan Pascanu, Matthew Lai, Danilo Jimenez~Rezende, et~al.
\newblock Interaction networks for learning about objects, relations and
  physics.
\newblock {\em Advances in neural information processing systems}, 29, 2016.

\bibitem{chk:00}
A.J. Chorin, O.H. Hald, and R.~Kupferman.
\newblock Optimal prediction and the {Mori--Zwanzig} representation of
  irreversible processes.
\newblock {\em Proceedings of the National Academy of Sciences},
  97(7):2968--2973, 2000.

\bibitem{belbute2020}
Filipe De~Avila Belbute-Peres, Thomas Economon, and Zico Kolter.
\newblock Combining differentiable {PDE} solvers and graph neural networks for
  fluid flow prediction.
\newblock In Hal~Daumé III and Aarti Singh, editors, {\em Proceedings of the
  37th International Conference on Machine Learning}, volume 119 of {\em
  Proceedings of Machine Learning Research}, pages 2402--2411. PMLR, 13--18 Jul
  2020.

\bibitem{Defferrard2017}
Michaël Defferrard, Xavier Bresson, and Pierre Vandergheynst.
\newblock Convolutional neural networks on graphs with fast localized spectral
  filtering.
\newblock {\em arXiv:1606.09375}, 2017.

\bibitem{do2019graph}
Kien Do, Truyen Tran, and Svetha Venkatesh.
\newblock Graph transformation policy network for chemical reaction prediction.
\newblock In {\em Proceedings of the 25th ACM SIGKDD international conference
  on knowledge discovery \& data mining}, pages 750--760, 2019.

\bibitem{PyG2019}
Matthias Fey and Jan~E. Lenssen.
\newblock Fast graph representation learning with {PyTorch Geometric}.
\newblock In {\em ICLR Workshop on Representation Learning on Graphs and
  Manifolds}, 2019.

\bibitem{Filatrella2008}
Giovanni Filatrella, Arne~Hejde Nielsen, and Niels~Falsig Pedersen.
\newblock Analysis of a power grid using a kuramoto-like model.
\newblock {\em The European Physical Journal B}, 61:485--491, 2008.

\bibitem{Hamilton2020}
William~L Hamilton.
\newblock {\em Graph representation learning}.
\newblock Morgan \& Claypool Publishers, 2020.

\bibitem{harlim2015parametric}
John Harlim and Xiantao Li.
\newblock Parametric reduced models for the nonlinear schr{\"o}dinger equation.
\newblock {\em Physical Review E}, 91(5):053306, 2015.

\bibitem{hochreiter1997long}
Sepp Hochreiter and J{\"u}rgen Schmidhuber.
\newblock Long short-term memory.
\newblock {\em Neural computation}, 9(8):1735--1780, 1997.

\bibitem{Diederik2017}
Diederik~P. Kingma and Jimmy Ba.
\newblock Adam: A method for stochastic optimization, 2017.

\bibitem{kuramoto1975international}
Yoshiki Kuramoto.
\newblock Self-entrainment of a population of coupled non-linear oscillators.
\newblock In {\em International Symposium on Mathematical Problems in
  Theoretical Physics}, volume~39, page 420. Springer-Verlag, New York, 1975.

\bibitem{lasota2013chaos}
Andrzej Lasota and Michael~C Mackey.
\newblock {\em Chaos, fractals, and noise: stochastic aspects of dynamics},
  volume~97.
\newblock Springer Science \& Business Media, 2013.

\bibitem{lei2016data}
Huan Lei, Nathan~A Baker, and Xiantao Li.
\newblock Data-driven parameterization of the generalized langevin equation.
\newblock {\em Proceedings of the National Academy of Sciences},
  113(50):14183--14188, 2016.

\bibitem{mauroy2016global}
Alexandre Mauroy and Igor Mezi{\'c}.
\newblock Global stability analysis using the eigenfunctions of the koopman
  operator.
\newblock {\em IEEE Transactions on Automatic Control}, 61(11):3356--3369,
  2016.

\bibitem{mauroy2020introduction}
Alexandre Mauroy, Yoshihiko Susuki, and Igor Mezi{\'c}.
\newblock Introduction to the koopman operator in dynamical systems and control
  theory.
\newblock {\em The koopman operator in systems and control: concepts,
  methodologies, and applications}, pages 3--33, 2020.

\bibitem{Menara2019}
Tommaso Menara, Giacomo Baggio, Danielle~S Bassett, and Fabio Pasqualetti.
\newblock Stability conditions for cluster synchronization in networks of
  heterogeneous kuramoto oscillators.
\newblock {\em IEEE Transactions on Control of Network Systems}, 7(1):302--314,
  2019.

\bibitem{mori:65}
H.~Mori.
\newblock Transport, collective motion, and {Brownian} motion.
\newblock {\em Prog. Theor. Phys.}, 33:423 -- 450, 1965.

\bibitem{PyTorch2019}
Adam Paszke, Sam Gross, Francisco Massa, Adam Lerer, James Bradbury, Gregory
  Chanan, Trevor Killeen, Zeming Lin, Natalia Gimelshein, Luca Antiga, Alban
  Desmaison, Andreas Kopf, Edward Yang, Zachary DeVito, Martin Raison, Alykhan
  Tejani, Sasank Chilamkurthy, Benoit Steiner, Lu~Fang, Junjie Bai, and Soumith
  Chintala.
\newblock Pytorch: An imperative style, high-performance deep learning library.
\newblock In H.~Wallach, H.~Larochelle, A.~Beygelzimer, F.~d\textquotesingle
  Alch\'{e}-Buc, E.~Fox, and R.~Garnett, editors, {\em Advances in Neural
  Information Processing Systems 32}, pages 8024--8035. Curran Associates,
  Inc., 2019.

\bibitem{Scarselli2009}
Franco Scarselli, Marco Gori, Ah~Chung Tsoi, Markus Hagenbuchner, and Gabriele
  Monfardini.
\newblock The graph neural network model.
\newblock {\em IEEE Transactions on Neural Networks}, 20(1):61--80, 2009.

\bibitem{stinis2007higher}
Panagiotis Stinis.
\newblock Higher order mori--zwanzig models for the euler equations.
\newblock {\em Multiscale Modeling \& Simulation}, 6(3):741--760, 2007.

\bibitem{Sutskever2014}
Ilya Sutskever, Oriol Vinyals, and Quoc~V. Le.
\newblock Sequence to sequence learning with neural networks.
\newblock In {\em Proceedings of the 27th International Conference on Neural
  Information Processing Systems - Volume 2}, NIPS'14, page 3104–3112,
  Cambridge, MA, USA, 2014. MIT Press.

\bibitem{wu2020comprehensive}
Zonghan Wu, Shirui Pan, Fengwen Chen, Guodong Long, Chengqi Zhang, and S~Yu
  Philip.
\newblock A comprehensive survey on graph neural networks.
\newblock {\em IEEE transactions on neural networks and learning systems},
  32(1):4--24, 2020.

\bibitem{Yu2024}
Yin Yu, Xinyuan Jiang, Daning Huang, Yan Li, Meng Yue, and Tianqiao Zhao.
\newblock Pidgeun: Graph neural network-enabled transient dynamics prediction
  of networked microgrids through full-field measurement.
\newblock {\em IEEE Access}, 12:49464--49475, 2024.

\bibitem{zhou2020graph}
Jie Zhou, Ganqu Cui, Shengding Hu, Zhengyan Zhang, Cheng Yang, Zhiyuan Liu,
  Lifeng Wang, Changcheng Li, and Maosong Sun.
\newblock Graph neural networks: A review of methods and applications.
\newblock {\em AI open}, 1:57--81, 2020.

\bibitem{zwanzig:73}
R.~Zwanzig.
\newblock Nonlinear generalized {Langevin} equations.
\newblock {\em J. Stat. Phys.}, 9:215 -- 220, 1973.

\end{thebibliography}

\end{document}